\documentclass[a4paper,10pt]{amsart}
\usepackage{amsmath,amsfonts,amssymb,amsthm, amsfonts}
\usepackage[latin1]{inputenc}
\usepackage[english]{babel}

\usepackage[cmtip,arrow]{xy}
\usepackage{pb-diagram,pb-xy}

\swapnumbers 
\theoremstyle{plain}
\newtheorem{thm}{Theorem}[section]
\newtheorem{prop}[thm]{Proposition}
\newtheorem{lemma}[thm]{Lemma}

\theoremstyle{remark}
\theoremstyle{definition}
\newtheorem{rem}[thm]{Remark}

\newtheorem{remdef}[thm]{Remark-Definition}

\newtheorem{defi}[thm]{Definition}
\newtheorem{defis}[thm]{Definitions}

\newtheorem{notas}[thm]{Notations}




\title[On Jordan-Chevalley decomposition]{Some remarks on \\
the Jordan-Chevalley decomposition}

\author{Alberto Dolcetti and Donato Pertici}
\address{Dipartimento di Matematica e Informatica ``Ulisse Dini''\\Viale Morgagni 67/a\\50134 Firenze, ITALIA}\email{alberto.dolcetti@unifi.it, \ donato.pertici@unifi.it}


\begin{document}
\parindent 0pt

\selectlanguage{english}

\begin{abstract}
In this note we mainly study the fine Jordan-Chevalley decomposition: a refinement of the classical Jordan-Chevalley decomposition of a matrix and we pay a particular attention to the field of the coefficients of the matrix. Moreover we obtain some further additive and multiplicative decompositions of a matrix under suitable conditions.
\end{abstract}

\maketitle

\tableofcontents

\renewcommand{\thefootnote}{\fnsymbol{footnote}}
\footnotetext{
This research was partially supported by MIUR-PRIN: ``Variet\`a reali e complesse: geo\-me\-tria, topologia e analisi armonica'' and by GNSAGA-INdAM.}
\renewcommand{\thefootnote}{\arabic{footnote}}
\setcounter{footnote}{0}

{\scshape{Keywords.}} Fine Jordan-Chevalley decomposition, Frobenius covariants, Frobenius decomposition, unbreakable matrix, Schwerdtfeger's formula, Sylvester's formula, absolute value, complete valued field, real closed field, complete multiplicative Jordan-Chevalley decomposition, Singular Value Decomposition.

\medskip

{\scshape{Mathematics~Subject~Classification~(2010):} 15A21, 15A18, 12J10, 12J15.}

\section*{Introduction}

Aim of this note is to present some decomposition formulas for a square matrix $M$ starting from the classical \emph{Jordan-Chevalley decomposition} (or \emph{SN decomposition}).

Some of such formulas are well-known at least in ordinary real and complex cases. We set and, when possible, extend them in a coherent and self-contained context by paying attention to the properties of the field $\mathbb{K}$ of the coefficients of $M$.

In \S 1 we redraft the well-known additive \emph{Jordan-Chevalley decomposition} of $M$ as sum of its \emph{semisimple part} $S(M)$ and of its \emph{nilpotent part} $N(M)$ (Theorem \ref{SN-dec}). We construct the matrix $S(M)$ (and so $N(M) = M - S(M)$) as function of its \emph{Frobenius covariants} 
(Definitions \ref{def-partizione-I}) and of the eigenvalues of $M$. The Frobenius covariants of $S(M)$ are polynomial functions of $M$ uniquely determined by $M$ itself which we obtain by means of a suitable \emph{B\'ezout's identity} (Proposition \ref{ident-bezout-per-M}). We prove that an arbitrary matrix is semisimple if and only if it has a \emph{Frobenius decomposition}, i.e. it is linear combination of its Frobenius covariants with nonzero pairwise distinct coefficients (Proposition \ref{unicita-dec-Frob}). 
The matrices $S(M)$, $N(M)$ have coefficients in $\mathbb{K}^\dag$, the fixed field of $Aut(\mathbb{F} / \mathbb{K})$, where $\mathbb{F}$ is the spitting field of the minimal polynomial of $M$.

In \S 2 we decompose $S(M)$ in a unique way as sum of a finite number of suitable semisimple matrices $S_i(M)$'s with coefficients in $\mathbb{K}^\dag$, called \emph{unbreakable}. Each $S_i(M)$ is again a polynomial function of $M$ and it corresponds to  a distinct irreducible component over $\mathbb{K}$ of the minimal polynomial of $M$ or, equivalently, to a distinct orbit of the action of $Aut(\mathbb{F} / \mathbb{K})$ on the spectrum of $M$ (Definition \ref{unbreakable}, Proposition \ref{esistenza}, Remark \ref{S_i-unbreakable}).

In correspondence to each $S_i(M)$ we determine a suitable nilpotent matrix $N_i(M)$ with coefficients in $\mathbb{K}^\dag$. Such matrices, whose sum is $N(M)$, are polynomial functions of $M$, uniquely determined by suitable conditions (Notations \ref{S_i(M)N_i(M)}, Proposition \ref{N_i-uniche}).

Putting together these two decompositions, we get  \emph{the additive fine Jordan-Chevalley decomposition} of $M$ (Definition \ref{FineSN}), which seems to refine the Jordan-Chevalley decomposition.

Aim of \S 3 is to obtain the analogous of \emph{Schwerdtfeger's formula} and of \emph{Sylvester's formula}, which hold in real and complex cases and allow to express the image of a matrix under an analytic function $f$ by means of the derivatives of $f$, of the eigenvalues and of the Frobenius covariants of the matrix. This is fully obtained when $f$ is a polynomial, while, to guarantee the convergence of $f$ as a series, we assume that the field $\mathbb{K}$ is a \emph{complete valued field} with respect to a \emph{non-trivial absolute value} (Proposition \ref{Schw-Sylv}). The formula, we get, allows to identify easily the semisimple part and the nilpotent part of the image of the matrix. Finally we sketch how to get its fine components (Remark \ref{sketch-decomp-fine}).

Section \S 4 is devoted to \emph{real closed fields}, generalizing the real field. In particular we prove that if $M$ is a nonsingular matrix, then it has a unique \emph{complete multiplicative Jordan-Chevalley decomposition}, which expresses $M$ as a product of three pairwise commuting matrices which are polynomial expressions of $M$: a diagonalizable matrix over $\mathbb{K}$ with strictly positive eigenvalues, a semisimple matrix with eigenvalues of norm $1$ and a unipotent matrix (Proposition \ref{deltasigmau}). 
We conclude the section by proving, over the algebraic closure of such fields, the existence and the uniqueness of a coordinate-free version of the \emph{Singular Value Decomposition} of a matrix (Propositions \ref{SVD-esiste} and \ref{SVF-unica}).

\section{Jordan-Chevalley decomposition}

In this paper $\mathbb{K}$ denotes an arbitrary fixed field, $\overline{\mathbb{K}}$ its algebraic closure, $M_n(\mathbb{K})$ the $\mathbb{K}$-algebra of square matrices of order $n$ with coefficients in $\mathbb{K}$ and $I_n$ the identity matrix of order $n$.

\begin{defis}\label{def-partizione-I}
a) The \emph{spectrum}, $Sp(A)$, of a matrix $A  \in M_n(\mathbb{K})$ is the set of all eigenvalues of $A$ in $\overline{\mathbb{K}}$ and $Sp^*(A)$ is the set of nonzero elements of $Sp(A)$.

b) We recall that a matrix $A  \in M_n(\mathbb{K})$ is \emph{semisimple} if it is diagonalizable over $\overline{\mathbb{K}}$ or equivalently if it is diagonalizable over the splitting field of its minimal polynomial.

c) We say that a non-empty family of matrices $A_1, \cdots , A_p \in M_n(\overline{\mathbb{K}}) \setminus \{ 0 \} $ is a \emph{Frobenius system}, if $A_i A_j = \delta_{ij}A_i$ for every $i,j$ ($\delta_{ij}$ is the Kronecker symbol).

d) We call \emph{Frobenius decomposition of} $A \in M_n(\mathbb{K})$ any decomposition
\begin{center}
$A= \sum_{i=1}^p \lambda_i A_i$
\end{center}  
where $A_1, \cdots , A_p \in M_n(\overline{\mathbb{K}}) \setminus \{0\}$ form a Frobenius system and $\lambda_1, \cdots , \lambda_p \in \overline{\mathbb{K}} \setminus \{0\}$ are pairwise distinct. 
In this case, the matrices $A_1, \cdots , A_p$ are called \emph{Frobenius covariants} of $A$ (see for instance \cite{HoJ1991}, p. 403).
\end{defis}

\begin{lemma}\label{Propr-partizione}
Let $A \in M_n(\mathbb{K})$ and assume that it has a Frobenius decomposition $A= \sum_{i=1}^p \lambda_i A_i$. Then

a) $\overline{K}^n = Im A_1 \oplus Im A_2 \oplus \cdots \oplus Im A_p \oplus Ker A$;

b) the distinct nonzero eigenvalues of $A$ in $\overline{\mathbb{K}}$ are $\lambda_1, \cdots , \lambda_p$ (hence $A \ne 0$);

c) $Ker(A-\lambda_i I_n) = Im(A_i)$ for every $i$, $Ker(A) = Im(I_n -\sum_{i=1}^p A_i)$ and so $A$ is semisimple;

d) $0$ is an eigenvalue of $A$ if and only if $\sum_{i=1}^p rk(A_i) < n$ .
\end{lemma}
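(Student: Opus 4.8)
The plan is to reduce every assertion to two facts about the Frobenius system: the matrix $E := \sum_{i=1}^p A_i$ is an idempotent, and $A$ acts as the scalar $\lambda_i$ on $Im(A_i)$. First I would record the elementary consequences of $A_iA_j=\delta_{ij}A_i$: each $A_i$ is idempotent, $E^2=E$, and $AE=EA=A$, all obtained by expanding the products and collecting the Kronecker deltas. In particular $\overline{\mathbb{K}}^n = Im(E)\oplus Ker(E)$, and $Ker(E)\subseteq Ker(A)$ since $Av=AEv$ for every $v$.

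Next I would show $Im(E)=Im(A_1)\oplus\cdots\oplus Im(A_p)$. For $v=A_iw$ one computes $A_jv=\delta_{ij}v$, hence $Ev=v$, so each $Im(A_i)$ sits inside $Im(E)$; the inclusion $Im(E)\subseteq\sum_i Im(A_i)$ is trivial; and directness follows by applying $A_j$ to a relation $\sum_i v_i=0$ with $v_i\in Im(A_i)$. The same computation $A_jv=\delta_{ij}v$ for $v\in Im(A_i)$ gives $Av=\lambda_i v$, so the restriction of $A$ to $Im(E)$ is block-diagonal with blocks $\lambda_i I$, and since every $\lambda_i$ is nonzero this restriction is bijective; in particular $Ker(A)\cap Im(E)=0$. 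Combining this with $Ker(E)\subseteq Ker(A)$ and $\overline{\mathbb{K}}^n = Im(E)\oplus Ker(E)$, I would decompose an arbitrary $v\in Ker(A)$ along the latter sum, kill its $Im(E)$-component by the injectivity just proved, and conclude $Ker(A)=Ker(E)=Im(I_n-E)$. Together with the description of $Im(E)$, this yields part a) and the second equality in part c).

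For part b) and the first equality in c) I would use the resulting splitting $A=\big(\bigoplus_i\lambda_i I\big)\oplus 0$ relative to $\overline{\mathbb{K}}^n = Im(A_1)\oplus\cdots\oplus Im(A_p)\oplus Ker(A)$: it shows at once that the set of eigenvalues of $A$ is $\{\lambda_1,\dots,\lambda_p\}$, together with $0$ exactly when $Ker(A)\ne 0$, and that the $\lambda_i$ are pairwise distinct nonzero eigenvalues, so that $p\geq 1$ forces $A\neq 0$. For $Ker(A-\lambda_iI_n)=Im(A_i)$ I would take $v$ with $Av=\lambda_i v$, expand $v$ along the direct sum of a), apply $A$, and compare components, using $\lambda_j\neq\lambda_i$ for $j\neq i$ and $\lambda_i\neq 0$ to force all components except the $i$-th to vanish. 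Since $\overline{\mathbb{K}}^n$ is then a direct sum of eigenspaces of $A$, the matrix $A$ is diagonalizable over $\overline{\mathbb{K}}$, hence semisimple. Finally part d) is a dimension count in a): $n=\sum_i rk(A_i)+\dim Ker(A)$, so $0$ is an eigenvalue, i.e. $\dim Ker(A)>0$, precisely when $\sum_i rk(A_i)<n$.

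The only point that needs a little care is the step $Ker(A)=Ker(E)$: one inclusion is formal, but the reverse one relies on $A$ being injective on $Im(E)$, which in turn is exactly where the hypothesis that all $\lambda_i$ are nonzero (not merely pairwise distinct) is essential. Everything else is bookkeeping with the idempotents $A_i$ and $E$.
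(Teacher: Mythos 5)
Your proof is correct. Note, however, that the paper itself does not prove this lemma; it simply states that the facts are standard and points to \cite{YTT2011}, Ch.~2, so there is no argument in the paper to compare against. What you wrote is a clean, self-contained proof along the standard lines: introduce the idempotent $E=\sum_i A_i$, get $\overline{\mathbb{K}}^n = Im\,E \oplus Ker\,E$, identify $Im\,E$ with $\bigoplus_i Im\,A_i$ via $A_j v = \delta_{ij}v$ for $v\in Im\,A_i$, observe that $A$ acts as $\lambda_i I$ on $Im\,A_i$ and is therefore injective on $Im\,E$ (here, as you correctly flag, $\lambda_i\neq 0$ is essential), and from $AE=A$ deduce $Ker\,E\subseteq Ker\,A$ and then $Ker\,A = Ker\,E = Im(I_n-E)$. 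Parts (b), the first equality in (c), semisimplicity, and (d) then all fall out of the resulting eigenspace decomposition and a dimension count, exactly as you describe. The one small thing worth making explicit is that $A_i\neq 0$ (part of the Frobenius-system definition) guarantees each $\lambda_i$ really is an eigenvalue, which is what makes the identification in (b) complete.
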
 

\begin{proof} 
These facts are standard and their proofs can be found for instance in \cite{YTT2011} Ch.2.
\end{proof}

\begin{lemma}\label{due-partizioni}
Let $A_1, \cdots , A_l \in M_n(\mathbb{K}) \setminus \{0\}$, $l \ge 1$, and assume that, for every $i$,  $A_i= \sum_{j=1}^{p_i} \lambda_{ij} A_{ij}$ is a Frobenius decomposition of $A_i$ with $Sp^*(A_i) \cap Sp^*(A_{i'}) = \emptyset$ for every $i \ne i'$.

Then \, $\sum_{i=1}^l \sum_{j=1}^{p_i} \lambda_{ij} A_{ij}$ is a Frobenius decomposition of \, $\sum_{i=1}^l A_i$ if and only if $A_i A_{i'}=0$ as soon as $i \ne i'$.
\end{lemma}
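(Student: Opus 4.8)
The plan is to prove the two implications of the biconditional separately, reading every Frobenius-system relation ``$BC=\delta B$'' through Lemma \ref{Propr-partizione}, i.e. as an assertion about images and kernels; note that $\sum_{i,j}\lambda_{ij}A_{ij}$ already equals $\sum_i A_i$ by construction, so only the qualifier \emph{Frobenius} is at stake. The implication ``$\Rightarrow$'' is immediate: if $\sum_{i,j}\lambda_{ij}A_{ij}$ is a Frobenius decomposition of $\sum_i A_i$, then by Definition \ref{def-partizione-I}d) the family $\{A_{ij}\}$ is a Frobenius system, so $A_{ij}A_{i'j'}=0$ whenever $(i,j)\neq(i',j')$, hence in particular for all $j,j'$ as soon as $i\neq i'$; expanding $A_iA_{i'}=\big(\sum_j\lambda_{ij}A_{ij}\big)\big(\sum_{j'}\lambda_{i'j'}A_{i'j'}\big)=\sum_{j,j'}\lambda_{ij}\lambda_{i'j'}A_{ij}A_{i'j'}$ then gives $A_iA_{i'}=0$. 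The disjointness hypothesis on the $Sp^*(A_i)$ is not used in this direction.

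For ``$\Leftarrow$'' I would assume $A_iA_{i'}=0$ for all $i\neq i'$ and check the three requirements of Definition \ref{def-partizione-I}d) for the candidate decomposition. The crucial one is that $\{A_{ij}\}_{i,j}$ is a Frobenius system. Multiplying $A_i=\sum_k\lambda_{ik}A_{ik}$ on the right by $A_{ij}$ and using $A_{ik}A_{ij}=\delta_{kj}A_{ij}$ gives $A_iA_{ij}=\lambda_{ij}A_{ij}$, and since $\lambda_{ij}\neq 0$ this yields $Im(A_{ij})\subseteq Im(A_i)$. On the other hand Lemma \ref{Propr-partizione}c) gives $Ker(A_i)=Im\big(I_n-\sum_k A_{ik}\big)$, and $A_{ij}\big(I_n-\sum_k A_{ik}\big)=0$ (only the $k=j$ term of $\sum_k A_{ij}A_{ik}$ survives), so $Ker(A_i)\subseteq Ker(A_{ij})$. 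Combining these with $Im(A_{i'})\subseteq Ker(A_i)$ (which is exactly $A_iA_{i'}=0$), for $i\neq i'$ we get $Im(A_{i'j'})\subseteq Im(A_{i'})\subseteq Ker(A_i)\subseteq Ker(A_{ij})$, hence $A_{ij}A_{i'j'}=0$, and symmetrically $A_{i'j'}A_{ij}=0$; together with $A_{ij}A_{ij'}=\delta_{jj'}A_{ij}$ this is precisely the Frobenius-system condition. The remaining requirements are routine: each $A_{ij}\neq 0$ by hypothesis, and the $\lambda_{ij}$ are nonzero and pairwise distinct — distinct within a fixed $i$ by hypothesis, and across distinct $i,i'$ because $\lambda_{ij}\in Sp^*(A_i)$ by Lemma \ref{Propr-partizione}b) while $Sp^*(A_i)\cap Sp^*(A_{i'})=\emptyset$.

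The single step that needs genuine care is the deduction $A_{ij}A_{i'j'}=0$ for $i\neq i'$ from the coarser hypothesis $A_iA_{i'}=0$; the key is the ``sandwich'' $Im(A_{ij})\subseteq Im(A_i)$, $Ker(A_i)\subseteq Ker(A_{ij})$, which confines each covariant of $A_i$ between the image and kernel of $A_i$ itself. Everything else is bookkeeping, and I anticipate no obstacle beyond keeping the double indexing straight.
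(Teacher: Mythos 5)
Your proof is correct, and the forward implication is handled exactly as the paper does (the paper simply calls it ``trivial''). For the converse, both you and the paper deduce $A_{ij}A_{i'j'}=0$ for $i\ne i'$ from $A_iA_{i'}=0$ via Lemma~\ref{Propr-partizione}, but the mechanics differ slightly. The paper fixes a vector $w$, computes $0=A_iA_{i'}(A_{i'm}w)=\sum_h\lambda_{ih}\lambda_{i'm}\,A_{ih}(A_{i'm}w)$ after collapsing the double sum by $A_{i'k}A_{i'm}=\delta_{km}A_{i'm}$, and then splits this sum along the direct-sum decomposition $\overline{\mathbb{K}}^n=\bigoplus_h Im\,A_{ih}\oplus Ker\,A_i$ of Lemma~\ref{Propr-partizione}(a) to make each summand vanish; since $\lambda_{ih}\lambda_{i'm}\neq 0$ this gives $A_{ih}A_{i'm}w=0$. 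You instead isolate two structural inclusions --- $Im(A_{ij})\subseteq Im(A_i)$ from $A_iA_{ij}=\lambda_{ij}A_{ij}$, and $Ker(A_i)\subseteq Ker(A_{ij})$ from $A_{ij}(I_n-\sum_k A_{ik})=0$ via Lemma~\ref{Propr-partizione}(c) --- and then chain $Im(A_{i'j'})\subseteq Im(A_{i'})\subseteq Ker(A_i)\subseteq Ker(A_{ij})$. These are two presentations of the same underlying idea: the vector computation in the paper is the sandwich made implicit. Your phrasing avoids an explicit appeal to the direct sum and packages the Frobenius relations as reusable kernel/image inclusions, which makes the logical role of each hypothesis somewhat more visible; the paper's version is shorter but buries the structure in one chain of equalities. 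The bookkeeping at the end (covariants nonzero, the $\lambda_{ij}$'s nonzero, distinct within a block by hypothesis and across blocks by $Sp^*(A_i)\cap Sp^*(A_{i'})=\emptyset$ together with Lemma~\ref{Propr-partizione}(b)) matches the paper's closing line.
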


\begin{proof}
One implication is trivial.

Assume now that $A_i A_{i'}=0$ for every $i \ne i'$ and remember that from the previous Lemma we have: 
$\overline{K}^n = Im A_{i1} \oplus  \cdots \oplus Im A_{i p_i} \oplus Ker A_i$ for every $i$.

Fix an index $i' \in \{1, \cdots , l \}$, an index $m \in \{1, \cdots , p_{i'} \}$ and a vector $w \in \overline{\mathbb{K}}^n$. 

For $i \ne i'$ we have: $0 = A_i A_{i'} (A_{i'm} w) = \sum_{h, k} \lambda_{ih} \lambda_{i'k} A_{ih} A_{i'k} (A_ {i'm} w) =$

$ \sum_{h}\lambda_{ih} \lambda_{i'm} A_{ih} A_{i'm} (A_{i'm} w) =  \sum_{h}\lambda_{ih} \lambda_{i'm} A_{ih} (A_{i'm} w)$, hence from the decompositions of $\overline{\mathbb{K}}^n$ above, we get: 
$\lambda_{ih} \lambda_{i'm} A_{ih} (A_{i'm} w) =0$ for every $h \in \{1, \cdots , p_i \}$. So $A_{ih} A_{i'm} w=0$ for every $w \in \overline{\mathbb{K}}^n$, being $\lambda_{ih} \lambda_{i'm} \ne 0$. Hence $\{A_{ij}\}$ is a Frobenius system and we can conclude because the $\lambda_{ij}$'s are nonzero and pairwise distinct.
\end{proof}

\begin{notas}[{\bf and remarks}]\label{pol-min}
a) Next $M$ will be always a fixed matrix in $M_n(\mathbb{K})$ with minimal polynomial 
\begin{center}
$m(X) =m_1(X)^{\mu_1} \cdots m_r(X)^{\mu_r}$
\end{center}
where $\mu_1, \cdots , \mu_r > 0$ and $m_1(X), \cdots , m_r(X)$ are mutually distinct irreducible polynomials in $\mathbb{K}[X]$ of degrees $d_1, \cdots , d_r$ respectively. 

Note that $0$ is an eigenvalue of $M$ if and only if one of the polynomials $m_i(X)$'s is $X$. From now on, in this case (after reordering) we assume that $m_r(X)=X$.

b) We denote by $\mathbb{F}$ the splitting field of $m(X)$ and by $\mathbb{K}^\dag$ the fixed field of the group $Aut(\mathbb{F} / \mathbb{K})$ (the group of automorphisms of $\mathbb{F}$ fixing each element of $\mathbb{K}$). 

Of course: $\mathbb{K} \subseteq \mathbb{K}^\dag \subseteq \mathbb{F} \subseteq \overline{\mathbb{K}}$ and inclusions are generally strict.

Moreover $\mathbb{K}= \mathbb{K}^\dag$ (i.e. $\mathbb{F} / \mathbb{K}$ is a \emph{Galois extension}, see for instance \cite{Lang2002} Ch. VI \S 1) if and only if each polynomial $m_i(X)$ is \emph{separable} over $\mathbb{K}$ (i.e. its roots in $\overline{\mathbb{K}}$ are all distinct). This is always true if $\mathbb{K}$ is \emph{perfect}, e.g. in case of characteristic $0$ (see for instance \cite{Kapl1972} p.26 and p.58).

Note that, by Jordan canonical form, $M$ is semisimple if and only if $\mu_1 = \cdots = \mu_r =1$ and $\mathbb{K} = \mathbb{K}^\dag$.

c) We denote by $\lambda_{i 1}, \cdots , \lambda_{i \rho_i}$ the $\rho_i$ distinct roots of $m_i(X)$ (in $\mathbb{F})$. 

We recall that, if $P(X) \in \mathbb{K}[X]$ is irreducible over $\mathbb{K}$, the subset of $\overline{\mathbb{K}}$ of all its distinct roots is said to be \emph{a conjugacy class over} $\mathbb{K}$. Therefore $\lambda_{i 1}, \cdots , \lambda_{i \rho_i}$ form a conjugacy class over $\mathbb{K}$.

By the assumption in (a), if $0$ is eigenvalue of $M$, then $\rho_r = 1$ and $\lambda_{r1} = 0$.
Hence $Sp(M)$ and $Sp^*(M)$ are both disjoint union of conjugacy classes over $\mathbb{K}$.

Note that $\lambda_{ij} \ne \lambda_{i'j'}$ as soon as $(i,j) \ne (i',j')$ and moreover we have $\rho_i \leq d_i$, generally without equality because the polynomials $m_i(X)$'s are not supposed to be separable.

Every element of $Aut(\mathbb{F} / \mathbb{K})$ acts as a permutation on the sets of roots of each polynomial $m_i$, so every $\varphi \in Aut(\mathbb{F} / \mathbb{K})$ induces a permutation $\sigma_i^{\varphi}$ on each set $\{ 1, \cdots , \rho_i \}$, $i = 1 , \cdots , r$ such that $\varphi(\lambda_{ij}) = \lambda_{i \, \sigma_i^{\varphi}(j)}$. 
 
Polynomials in $\mathbb{F}[X]$, which are invariant under the action of $Aut(\mathbb{F} / \mathbb{K})$ on their coefficients, are actually in $\mathbb{K}^\dag[X]$.

We can factorize $m(X) = \prod_{i=1}^r \prod_{j=1}^{\rho_i} (X-\lambda_{ij})^{\eta_i}$ for suitable integers $\eta_i \ge \mu_i$. 

The exponent $\eta_i$ is equal to $\mu_i$, if $m_i(X)$ is a separable polynomial; otherwise the characteristic of $\mathbb{K}$ is positive and $\dfrac{\eta_i}{\mu_i}$ is a power of it. In all cases the power $\eta_i$ of $(X-\lambda_{ij})$ is independent of $j$ (see for instance \cite{Lang2002} pp.284--285).

d) For every $i$, we denote $g_i(X) = \prod_{j=1}^{\rho_i} (X- \lambda_{ij})$ and 
$g(X) = \prod_{i=1}^r g_i(X)$.

The polynomials $g_i$ and $g$ are invariant under the action of the group $Aut(\mathbb{F} / \mathbb{K})$ (because the coefficients of each $g_i$ are elementary symmetric functions of the roots $\lambda_{i 1}, \cdots , \lambda_{i \rho_i}$), so they belong to $\mathbb{K}^\dag [X]$.

We remark that, if $\mathbb{L}'/ \mathbb{L}$ is any normal finite-dimensional extension, then the orbit of every $\alpha \in \mathbb{L}'$ under the action of $Aut(\mathbb{L}'/ \mathbb{L})$ coincides with the conjugacy class of $\alpha$ over $\mathbb{L}$ (this is a consequence for instance of \cite{Kapl1972} Thm.21 p.24).

Since $\mathbb{F}$ is a normal finite-dimensional extension of both $\mathbb{K}$ and $\mathbb{K}^\dag$ and $Aut(\mathbb{F}/ \mathbb{K}) = Aut(\mathbb{F}/ \mathbb{K}^\dag)$, the conjugacy classes of $\alpha$ over $\mathbb{K}$ and over $\mathbb{K}^\dag$ overlap.

Therefore, for every $i$, the set $\{\lambda_{i1}, \cdots , \lambda_{i\rho_i}\}$ is a conjugacy class over $\mathbb{K}^\dag$ too, hence $g_i$ is irreducible as element of $\mathbb{K}^\dag[X]$ and $\rho_i$ is the degree of each $\lambda_{ij}$ on $\mathbb{K}^\dag$.

e) Finally we pose $G_{ij}(X) = \dfrac{m(X)}{(X-\lambda_{ij})^{\eta_i}} = \prod_{h \ne i} m_h(X)^{\mu_h} \prod_{k \neq j} (X- \lambda_{ik})^{\eta_i}$ for every $i= 1 , \cdots , r$ and every $j = 1 , \cdots , \rho_i$ and we observe that
$G_{ij}(X)$ has coefficients in $\mathbb{K}(\lambda_{i1}, \cdots , \lambda_{i \rho_i}) \subseteq \mathbb{F}$.
\end{notas}

\begin{prop}\label{ident-bezout-per-M}
Assume the same notations as in \ref{pol-min}.

a) There is a unique set of polynomials $\{ B_{ij}(X) \ / \ i= 1 , \cdots , r, \ j = 1 , \cdots , \rho_i \}$ in $\mathbb{F}[X]$  such that
\begin{center}
$B_{ij}(X) \in \mathbb{K}(\lambda_{i1} \cdots , \lambda_{i \rho_i})[X], \  deg\, B_{ij}(X) < \eta_i \mbox{ for every } i,j$ \ and

$\sum_{i=1}^r \sum_{j=1}^{\rho_i} B_{ij}(X) G_{ij}(X) = 1$ \ \ \mbox{(B\'ezout's identity).}
\end{center}

b) For every $i= 1 , \cdots , r$ and $j = 1 , \cdots , \rho_i$, the polynomial 
\begin{center}
$C_{ij}(X) = B_{ij}(X) G_{ij}(X)$
\end{center}
is in $\mathbb{K}(\lambda_{i1} \cdots , \lambda_{i \rho_i})[X] \subseteq \mathbb{F}[X]$ and satisfies $deg \, C_{ij}(X) < deg \, m(X)$.

Moreover the family of matrices  $C_{ij}(M) \in M_n(\mathbb{F})$ ($i= 1 , \cdots , r, \ j = 1 , \cdots , \rho_i$) is a Frobenius system with 
\begin{center}
$
\sum_{i=1}^r \sum_{j=1}^{\rho_i} C_{ij}(M) = I_n
$.
\end{center}
\end{prop}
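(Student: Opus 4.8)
My plan is to reduce the whole statement to the Chinese Remainder Theorem applied to the pairwise coprime factors $(X-\lambda_{ij})^{\eta_i}$ of $m(X)$, together with elementary degree estimates. Writing $\mathbb{E}_{ij}:=\mathbb{K}(\lambda_{i1},\dots,\lambda_{i\rho_i})$, I would first fix a pair $(i,j)$ and note that $(X-\lambda_{ij})^{\eta_i}$ and $G_{ij}(X)$ are coprime in $\mathbb{F}[X]$, since the roots of $G_{ij}$ are precisely the $\lambda_{i'j'}$ with $(i',j')\ne(i,j)$, all distinct from $\lambda_{ij}$. Both polynomials actually lie in $\mathbb{E}_{ij}[X]$ (for $G_{ij}$ this is item (e) of Notations \ref{pol-min}), so running the extended Euclidean algorithm inside $\mathbb{E}_{ij}[X]$ produces a polynomial $B_{ij}(X)\in\mathbb{E}_{ij}[X]$; imposing $\deg B_{ij}<\eta_i$ makes it the unique representative of the inverse of $G_{ij}$ in $\mathbb{E}_{ij}[X]/\bigl((X-\lambda_{ij})^{\eta_i}\bigr)$, so that $B_{ij}(X)G_{ij}(X)\equiv 1 \pmod{(X-\lambda_{ij})^{\eta_i}}$.

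Next I would set $B(X):=\sum_{i,j}B_{ij}(X)G_{ij}(X)$ and check $B=1$. For each fixed $(k,l)$, every summand with $(i,j)\ne(k,l)$ is divisible by $(X-\lambda_{kl})^{\eta_k}$ (that factor survives in $G_{ij}=m/(X-\lambda_{ij})^{\eta_i}$), so $B\equiv B_{kl}G_{kl}\equiv 1\pmod{(X-\lambda_{kl})^{\eta_k}}$; since these moduli are pairwise coprime with product $m(X)$, one gets $B\equiv 1\pmod{m(X)}$. On the other hand $\deg(B_{ij}G_{ij})\le(\eta_i-1)+(\deg m-\eta_i)=\deg m-1$, hence $\deg B<\deg m$ and therefore $B=1$, the desired B\'ezout identity. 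For uniqueness of the family I would argue the same way: if $\sum_{i,j}(B_{ij}-B_{ij}')G_{ij}=0$ with all degrees $<\eta_i$, reduction modulo $(X-\lambda_{kl})^{\eta_k}$ leaves only $(B_{kl}-B_{kl}')G_{kl}$, and coprimality of $G_{kl}$ with that modulus forces $(X-\lambda_{kl})^{\eta_k}\mid(B_{kl}-B_{kl}')$, which by the degree bound gives $B_{kl}=B_{kl}'$.

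For part (b), the membership $C_{ij}=B_{ij}G_{ij}\in\mathbb{E}_{ij}[X]$ and the bound $\deg C_{ij}<\deg m$ follow at once from (a) and the degree count just made, and evaluating the B\'ezout identity at $M$ gives $\sum_{i,j}C_{ij}(M)=I_n$. For the Frobenius relations I would observe that, when $(i,j)\ne(k,l)$, the coprime factors $(X-\lambda_{ij})^{\eta_i}$ and $(X-\lambda_{kl})^{\eta_k}$ both divide $m(X)$, so their product does too, whence $m(X)\mid G_{ij}G_{kl}\mid C_{ij}C_{kl}$; since $m(M)=0$ this gives $C_{ij}(M)C_{kl}(M)=0$. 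Multiplying the identity $\sum_{k,l}C_{kl}=1$ by $C_{ij}$ and dropping the off-diagonal terms (each a multiple of $m$) yields $C_{ij}\equiv C_{ij}^2\pmod{m}$, i.e.\ $C_{ij}(M)^2=C_{ij}(M)$. Finally $C_{ij}(M)\ne 0$, for otherwise $m(X)\mid C_{ij}(X)$ and then $\deg C_{ij}<\deg m$ would force $C_{ij}=0$ as a polynomial, which is absurd since $B_{ij}\ne 0$ (being invertible modulo $(X-\lambda_{ij})^{\eta_i}$) and $G_{ij}\ne 0$. Hence $\{C_{ij}(M)\}$ is a Frobenius system with sum $I_n$.

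The main obstacle, such as it is, will be the descent of the B\'ezout coefficients to $\mathbb{E}_{ij}[X]$ in part (a): one must be careful to see that the degree constraint $\deg B_{ij}<\eta_i$ determines $B_{ij}$ uniquely, so that the extended Euclidean algorithm — which never leaves the coefficient field it starts in — really returns coefficients in $\mathbb{E}_{ij}$ and not merely in $\mathbb{F}$. Everything else is routine bookkeeping with the Chinese Remainder Theorem and with degrees.
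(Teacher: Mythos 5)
Your proof is correct and follows essentially the same Bézout/modular-arithmetic route as the paper: coprimality of $(X-\lambda_{ij})^{\eta_i}$ and $G_{ij}(X)$ inside $\mathbb{K}(\lambda_{i1},\dots,\lambda_{i\rho_i})[X]$, degree bounds forcing the global sum to equal $1$, and divisibility by $m(X)$ to obtain the Frobenius relations for the $C_{ij}(M)$. You additionally verify explicitly that $C_{ij}(M)\neq 0$, a point the paper's proof leaves implicit when asserting that the family is a Frobenius system.
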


\begin{proof}
Since $G_{ij}(X)$ and $(X-\lambda_{ij})^{\eta_i}$ are polynomials with coefficients in 

$\mathbb{K}(\lambda_{i1}, \cdots , \lambda_{i \rho_i})$ and have greatest common divisor equal to $1$, there exist, uniquely determined, 
$B_{ij}(X), L_{ij}(X) \in \mathbb{K}(\lambda_{i1} \cdots , \lambda_{i \rho_i})[X]$ such that 

$B_{ij}(X) G_{ij}(X) + L_{ij}(X) (X-\lambda_{ij})^{\eta_i} = 1$, 
$deg(B_{ij}) < \eta_i$ and $deg(L_{ij}) < deg(G_{ij}) = deg(m) - \eta_i$, for every $i,j$. 

So $(X-\lambda_{ij})^{\eta_i}$ divides $B_{ij}(X) G_{ij}(X) -1$. On the other hand $(X-\lambda_{ij})^{\eta_i}$ divides $B_{hk}(X) G_{hk}(X)$ as soon as $(h,k) \ne (i,j)$ and so  $\sum_{h,k} B_{hk}(X) G_{hk}(X) - 1$ is divided by every $(X-\lambda_{ij})^{\eta_i}$ and hence by $m(X)$. 

But $deg(\sum_{h,k} B_{hk}(X) G_{hk}(X) - 1) < deg(m(X))$ and so $\sum_{h,k} B_{hk}(X) G_{hk}(X) = 1$.

For the uniqueness of the polynomials $B_{ij}(X)$'s, assume that certain polynomials $A_{ij}(X)$'s satisfy the same properties of the $B_{ij}(X)$'s. 
Hence $\sum_{h,k} A_{hk}(X) G_{hk}(X) = A_{ij}(X) G_{ij}(X) + \sum_{(h,k) \ne (i,j)} A_{hk}(X) G_{hk}(X) = 1$ and
$deg(A_{ij}) < \eta_i$. 

Since $(X-\lambda_{ij})^{\eta_i}$ divides $\sum_{(h,k) \ne (i,j)} A_{hk}(X) G_{hk}(X)$, we can write 

$A_{ij}(X) G_{ij}(X) + L_{ij}'(X) (X-\lambda_{ij})^{\eta_i} =1$ with $deg(A_{ij}) < \eta_i$ and 

$deg(L_{ij}') < deg(G_{ij}) = deg(m) - \eta_i$ and we conclude (a) by the uniqueness above recalled.

For (b) we remark that $\sum_{i,j} C_{ij} (M) =I_n$ is a direct consequence of (a). 

If $(i,j) \ne (h,k)$, then $C_{ij}(M) C_{hk}(M) = B_{ij}(M) B_{hk}(M) G_{ij}(M) G_{hk}(M) = 0$, because $G_{ij}(X) G_{hk}(X)$ is a multiple of the minimal polynomial of $M$. 

Finally $C_{ij} (M) = C_{ij} (M) I_n = C_{ij} (M) \left[\, \sum_{h,k} C_{hk} (M)\right] = C_{ij}(M)^2$ and the assertion is proved.
\end{proof}

\begin{thm}[additive Jordan-Chevalley decomposition]\label{SN-dec} 
Let $M \in M_n(\mathbb{K})$, $\lambda_{i j}$ be the distinct eigenvalues of $M$ as in Notations \ref{pol-min} and the matrices $C_{ij}(M)$'s as in \ref{ident-bezout-per-M}.
Then the matrices
$$S(M)= \sum_{i=1}^r  \sum_{j=1}^{\rho_i} \lambda_{ij}C_{ij}(M) \mbox{ \ and \ } N(M)= M-S(M)$$
 are polynomial expressions of $M$ and have coefficients in $\mathbb{K}^\dag$, 
$S(M)$ is semisimple, $N(M)$ is nilpotent, $S(M) N(M)= N(M) S(M)$ and of course
$M = S(M) + N(M)$.

Moreover if $M=S+N$ is any decomposition with $S \in M_n(\overline{\mathbb{K}})$ semisimple, 

$N \in M_n(\overline{\mathbb{K}})$ nilpotent and $S N = N S$, then $S=S(M)$ and $N=N(S)$. 
\end{thm}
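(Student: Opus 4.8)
The plan is to verify the listed properties of $S(M)$ and $N(M)$ directly from Proposition \ref{ident-bezout-per-M} and the Frobenius-system machinery of Lemma \ref{Propr-partizione}, and then to establish uniqueness by a commutation-and-polynomial argument. First I would observe that $S(M)$ is a polynomial expression of $M$ by construction, since each $C_{ij}(X)\in\mathbb{F}[X]$; to see that its coefficients lie in $\mathbb{K}^\dag$, I would let $\varphi\in Aut(\mathbb{F}/\mathbb{K})$ act on everything in sight. Applying $\varphi$ to the B\'ezout identity $\sum_{i,j}B_{ij}(X)G_{ij}(X)=1$ and using that $\varphi$ permutes the roots within each conjugacy class (so $\varphi(G_{ij})=G_{i\,\sigma_i^\varphi(j)}$, and the degree and localisation conditions are preserved), the uniqueness clause of \ref{ident-bezout-per-M}(a) forces $\varphi(B_{ij})=B_{i\,\sigma_i^\varphi(j)}$, hence $\varphi(C_{ij})=C_{i\,\sigma_i^\varphi(j)}$. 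Then $\varphi$ sends $S(M)=\sum_{i,j}\lambda_{ij}C_{ij}(M)$ to $\sum_{i,j}\varphi(\lambda_{ij})C_{i\,\sigma_i^\varphi(j)}(M)=\sum_{i,j}\lambda_{i\,\sigma_i^\varphi(j)}C_{i\,\sigma_i^\varphi(j)}(M)=S(M)$, so $S(M)$ (and hence $N(M)=M-S(M)$) has coefficients fixed by every $\varphi$, i.e. in $\mathbb{K}^\dag$.

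Next I would read off semisimplicity and nilpotence. By \ref{ident-bezout-per-M}(b) the $C_{ij}(M)$ form a Frobenius system summing to $I_n$; those that are nonzero together with possibly $\operatorname{Ker}$-data give, via Lemma \ref{Propr-partizione}(c), that $S(M)=\sum\lambda_{ij}C_{ij}(M)$ is semisimple with $\operatorname{Ker}(S(M)-\lambda_{ij}I_n)=\operatorname{Im}C_{ij}(M)$ (discarding any zero summand; note if $0\in Sp(M)$ then $\lambda_{r1}=0$ and that term drops out anyway). For nilpotence of $N(M)=M-S(M)$: on each generalized eigenspace, i.e. on $\operatorname{Im}C_{ij}(M)=\operatorname{Ker}(M-\lambda_{ij}I_n)^{\eta_i}$ (this identification I would justify from $C_{ij}=B_{ij}G_{ij}$ and $G_{ij}(X)(X-\lambda_{ij})^{\eta_i}=m(X)$), $M$ acts with single eigenvalue $\lambda_{ij}$ and $S(M)$ acts as $\lambda_{ij}$ times the identity, so $N(M)$ is nilpotent there; since $\overline{\mathbb{K}}^n$ is the direct sum of these subspaces, $N(M)$ is globally nilpotent. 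Commutativity $S(M)N(M)=N(M)S(M)$ is immediate because both are polynomials in $M$.

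For uniqueness, suppose $M=S+N$ with $S,N\in M_n(\overline{\mathbb{K}})$, $S$ semisimple, $N$ nilpotent, $SN=NS$. The standard argument: since $S$ and $N$ commute with each other and with $M=S+N$, the generalized eigenspaces of $M$ are $S$- and $N$-invariant; on each $\operatorname{Ker}(M-\lambda I)^{e}$, $S$ is semisimple, commutes with the nilpotent $N=M-S$, and $M-S$ is nilpotent, which forces $S=\lambda I$ on that subspace (a semisimple matrix whose difference from a scalar matrix is nilpotent equals that scalar matrix). Hence $S$ agrees with $S(M)$ on every generalized eigenspace of $M$, so $S=S(M)$ and consequently $N=M-S=M-S(M)=N(M)$ --- matching the statement's notation, where ``$N(S)$'' is a typo for $N(M)$. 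The main obstacle I anticipate is bookkeeping rather than depth: carefully justifying $\operatorname{Im}C_{ij}(M)=\operatorname{Ker}(M-\lambda_{ij}I_n)^{\eta_i}$ (which rests on the Frobenius relations plus $m(X)\mid G_{ij}(X)(X-\lambda_{ij})^{\eta_i}$ and on $\gcd$ considerations over $\overline{\mathbb{K}}$), and handling the degenerate $\lambda=0$ case consistently with the convention $m_r(X)=X$, $\rho_r=1$ fixed in Notations \ref{pol-min}.
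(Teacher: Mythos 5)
Your plan is correct, and the central novelty of the result — that $S(M)$ and $N(M)$ have coefficients in $\mathbb{K}^\dag$ — is handled exactly as the paper does: applying each $\varphi\in Aut(\mathbb{F}/\mathbb{K})$ to the B\'ezout identity, using the uniqueness clause of Proposition \ref{ident-bezout-per-M}(a) to get $\varphi(B_{ij})=B_{i\,\sigma_i^\varphi(j)}$, hence $\varphi(C_{ij})=C_{i\,\sigma_i^\varphi(j)}$, and observing that $S(X)$ is then $Aut(\mathbb{F}/\mathbb{K})$-invariant. You also correctly spot the typo $N(S)$ for $N(M)$. Two subparts differ in tactics from the paper. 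For nilpotence you propose the route via $\operatorname{Im}C_{ij}(M)=\operatorname{Ker}(M-\lambda_{ij}I_n)^{\eta_i}$, arguing eigenspace-by-eigenspace; the paper instead expands directly, $[M-S(M)]^h=\bigl[\sum_{i,j}(M-\lambda_{ij}I_n)C_{ij}(M)\bigr]^h=\sum_{i,j}(M-\lambda_{ij}I_n)^h C_{ij}(M)=\bigl(\sum_{i,j}(M-\lambda_{ij}I_n)^{h-\eta_i}B_{ij}(M)\bigr)m(M)=0$ for $h\ge\max_i\eta_i$, which uses only the Frobenius relations and avoids the bookkeeping you flag as the main obstacle. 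For uniqueness you argue on each generalized eigenspace of $M$, forcing $S=\lambda I$ there by the commuting-nilpotent argument; the paper argues globally: $S$ and $S(M)$ commute and are both semisimple, hence simultaneously diagonalizable, so $S(M)-S$ is semisimple, and it equals the nilpotent $N-N(M)$ (a difference of commuting nilpotents), hence vanishes. Both of your alternatives are valid, just slightly longer; if you adopt the paper's direct computation for nilpotence you can drop the identification of $\operatorname{Im}C_{ij}(M)$ altogether.
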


\begin{proof}
Note that if $\varphi \in Aut(\mathbb{F} / \mathbb{K})$ and $\sigma_i^{\varphi}$ is the permutation induced by $\varphi$ on $\{ 1, \cdots , \rho_i \}$, $i = 1 , \cdots , r$, then $\varphi(B_{ij}(X)) = B_{i \, \sigma_i^{\varphi}(j)}(X)$ (we still denote by $\varphi$ its natural extension to $\mathbb{F}[X]$).

Indeed from the equality $1= \sum_{i=1}^r \sum_{j=1}^{\rho_i} B_{ij}(X) G_{ij}(X)$ we get: 

$
1= \varphi(1) = \varphi(\sum_{i=1}^r \sum_{j=1}^{\rho_i} B_{ij}(X) G_{ij}(X))= \sum_{i=1}^r \sum_{j=1}^{\rho_i} \varphi(B_{ij}(X)) \varphi(G_{ij}(X))
$.

Now  $\varphi(G_{ij}(X)) = G_{i \, \sigma_i^{\varphi}(j)}(X)$, because $\varphi$ acts as a permutation, hence 

$1=  \sum_{i=1}^r \sum_{j=1}^{\rho_i} \varphi(B_{ij}(X)) G_{i \, \sigma_i^{\varphi}(j)}(X)$ and $\varphi(B_{ij}(X)) = B_{i \, \sigma_i^{\varphi}(j)}(X)$ by uniqueness of $B_{ij}$'s in \ref{ident-bezout-per-M} (a).

In particular every $\varphi \in Aut(\mathbb{F} / \mathbb{K})$ satisfies: $\varphi(C_{ij}(X)) = C_{i\, \sigma_i^{\varphi}(j)}(X)$  for every $i,j$, because $\varphi$ preserves the product.

Now let us consider the polynomial $S(X) =  \sum_{i=1}^r  \sum_{j=1}^{\rho_i} \lambda_{ij}C_{ij}(X)$. 

For every $\varphi \in Aut(\mathbb{F} / \mathbb{K})$, we get $\varphi(S(X)) = \sum_{i=1}^r \sum_{j=1}^{\rho_i} \lambda_{i \, \sigma_i^{\varphi}(j)} C_{i \, \sigma_i^{\varphi}(j)}(X) = S(X)$, so $S(X) \in \mathbb{K}^\dag [X]$ being fixed by each $\varphi \in Aut(\mathbb{F} / \mathbb{K})$. 

This implies that $N(X)= X - S(X) \in  \mathbb{K}^\dag [X]$ and that the matrices $S(M)$ and $N(M)$ (in $M_n(\mathbb{K}^\dag)$) commute, because they are polynomial expressions of $M$.

$S(M)$ is diagonalizable on $\mathbb{F}$ by \ref{Propr-partizione} and \ref{ident-bezout-per-M} (b).

To prove that $N(M)$ is nilpotent, we remark that, by the properties of the matrices $C_{ij}(M)$'s, we have:
$[M-S(M)]^h = [\sum_{i,j} (M-\lambda_{ij}I_n)C_{ij}(M)]^h = \\
\sum_{i,j} (M-\lambda_{ij}I_n)^h C_{ij}(M) = (\sum_{i,j} (M-\lambda_{ij}I_n)^{h- \eta_i} B_{ij}(M)) m(M) =0$ as soon as $h \ge max\{\eta_1 , \cdots , \eta_r\}$.

Finally if $S(M) + N(M) = S + N$, then $S(M) - S = N - N(M)$. The condition $SN=NS$ implies that the matrices $S$ and $N$ commute with $M$ and so also with $S(M)$ and $N(M)$. Hence $S$ and $S(M)$ have  a common basis of eigenvectors in $\overline{\mathbb{K}}^n$, so $S(M) - S$ is semisimple. Since $N-N(M)$ is nilpotent, we conclude that $S(M) - S = N - N(M) = 0$.
\end{proof}

\begin{defi}
The matrices $S(M)$ and $N(M)$ of the previous Theorem are said to be the \emph{semisimple part} and the \emph{nilpotent part} of $M$ respectively and the decomposition $M= S(M) + N(M)$ is said to be the (\emph{additive}) \emph{Jordan-Chevalley decomposition} (or (\emph{additive}) \emph{SN decomposition}) of $M$.
\end{defi}

\begin{rem}\label{r'}
We denote by $r'$ the integer $r'=r$ when $M$ is nonsingular and $r'=r-1$ otherwise.
Remembering \ref{pol-min}, if $S(M) \ne 0$, we can write the semisimple part of $M$ as 
\begin{center}
$S(M)= \sum_{i=1}^{r'}  \sum_{j=1}^{\rho_i} \lambda_{ij}C_{ij}(M)$ , with $\lambda_{ij} \ne 0$ for every $i,j$.
\end{center}
This decomposition is a Frobenius decomposition of $S(M)$ and the matrices $C_{ij}(M)$ with $i = 1, \cdots , r'$ and $j=1 , \cdots , \rho_i$, are Frobenius covariants of $S(M)$.
\end{rem}

\begin{prop}\label{unicita-dec-Frob}
A matrix $A \in M_n(\mathbb{K}) \setminus \{0\}$ has a Frobenius decomposition if and only if it is semisimple.

If this is the case, the Frobenius decomposition and, so, the Frobenius covariants are uniquely determined.
\end{prop}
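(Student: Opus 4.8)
The plan is to prove the two implications separately, then establish uniqueness.

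For the easy direction, suppose $A \ne 0$ has a Frobenius decomposition $A = \sum_{i=1}^p \lambda_i A_i$. Then $A$ is semisimple directly by Lemma~\ref{Propr-partizione}(c), which already asserts that $A$ is diagonalizable over $\overline{\mathbb{K}}$; so nothing more is needed here.

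For the converse, assume $A \in M_n(\mathbb{K}) \setminus \{0\}$ is semisimple. Since $A$ is diagonalizable over $\overline{\mathbb{K}}$, its minimal polynomial $m_A(X)$ factors over $\overline{\mathbb{K}}$ as a product of distinct linear factors. Let $\lambda_1, \dots, \lambda_p$ be the distinct \emph{nonzero} eigenvalues of $A$ (there is at least one, since $A \ne 0$ and $A$ is diagonalizable, so $A$ cannot have $0$ as its only eigenvalue). I would then invoke Proposition~\ref{ident-bezout-per-M} (equivalently, classical Lagrange interpolation / partial fractions applied to $m_A$, since here all $\eta_i = 1$): there are polynomials $C_{i}(X) \in \overline{\mathbb{K}}[X]$ such that the matrices $A_i := C_i(A)$ form a Frobenius system with $\sum_{i} C_i(A) = I_n$, where the index $i$ runs over \emph{all} roots of $m_A$, including possibly $0$. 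Writing $A = A \cdot I_n = \sum_i A\,C_i(A) = \sum_i \lambda_i C_i(A)$ and dropping the term with eigenvalue $0$ (which contributes nothing), one gets $A = \sum_{i=1}^p \lambda_i A_i$ with the $\lambda_i$ nonzero and pairwise distinct and each $A_i \ne 0$ (since $A_i$ is the projection onto the nonzero eigenspace $\mathrm{Ker}(A - \lambda_i I_n)$, which is nontrivial). This is a Frobenius decomposition of $A$. Alternatively, and more elementarily, one can just take $A_i$ to be the spectral projection onto $\mathrm{Ker}(A - \lambda_i I_n)$ along the sum of the other eigenspaces; diagonalizability guarantees $\overline{\mathbb{K}}^n$ is the direct sum of all eigenspaces, so these projections are well defined, satisfy $A_i A_j = \delta_{ij} A_i$, and $\sum_{\lambda \in Sp(A)} (\text{projection onto }\mathrm{Ker}(A-\lambda I_n)) = I_n$.

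For uniqueness, suppose $A = \sum_{i=1}^p \lambda_i A_i = \sum_{k=1}^q \mu_k B_k$ are two Frobenius decompositions. By Lemma~\ref{Propr-partizione}(b), both $\{\lambda_i\}$ and $\{\mu_k\}$ are exactly the set of distinct nonzero eigenvalues of $A$, so $p = q$ and, after reindexing, $\lambda_i = \mu_i$ for all $i$. By Lemma~\ref{Propr-partizione}(c), $\mathrm{Im}(A_i) = \mathrm{Ker}(A - \lambda_i I_n) = \mathrm{Im}(B_i)$ for each $i$, and $\mathrm{Ker}(A) = \mathrm{Im}(I_n - \sum_i A_i) = \mathrm{Im}(I_n - \sum_i B_i)$; moreover part~(a) gives the direct sum decomposition $\overline{\mathbb{K}}^n = \bigoplus_i \mathrm{Im}(A_i) \oplus \mathrm{Ker}(A)$, the same for the $B_i$. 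A Frobenius system with these image conditions is exactly the family of projections onto the summands $\mathrm{Ker}(A-\lambda_i I_n)$ along the complementary direct sum: indeed $A_i$ acts as the identity on $\mathrm{Im}(A_i)$ (as $A_i A_i = A_i$) and kills $\mathrm{Im}(A_j)$ for $j \ne i$ (as $A_j A_i = 0$ gives $\mathrm{Im}(A_i) \subseteq \mathrm{Ker}(A_j)$, and on $\mathrm{Im}(A_j) = \mathrm{Ker}(A-\lambda_j I_n)$ one checks $A_i$ vanishes) and kills $\mathrm{Ker}(A)$; since these subspaces span $\overline{\mathbb{K}}^n$, the $A_i$ are uniquely determined, hence $A_i = B_i$ for all $i$.

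The main obstacle is getting the bookkeeping with the zero eigenvalue clean in the existence part — one must be careful that omitting the $0$-term still yields a genuine Frobenius system (which it does, since a subfamily of a Frobenius system is a Frobenius system) and that each retained $A_i$ is nonzero. Everything else is a direct reading of Lemma~\ref{Propr-partizione} and Proposition~\ref{ident-bezout-per-M}.
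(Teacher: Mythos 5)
Your existence argument follows essentially the paper's route (via Remark \ref{r'}, equivalently the spectral projections $C_{ij}(A)$, after noting that a semisimple $A$ equals its own semisimple part and has squarefree minimal polynomial), but your uniqueness argument is genuinely different. The paper forms the linear system $\sum_{h=1}^{p} \lambda_h^m A_h = A^m$ for $1 \le m \le p$ (an immediate consequence of the Frobenius relations $A_iA_j=\delta_{ij}A_i$), and observes that its coefficient matrix $(\lambda_h^m)_{m,h}$ has nonzero determinant --- the Vandermonde determinant of $\lambda_1,\dots,\lambda_p$ multiplied by $\lambda_1\cdots\lambda_p$ --- so the $A_i$'s are uniquely determined, and as a bonus this also exhibits each $A_i$ explicitly as a polynomial in $A$. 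You instead pin down each $A_i$ geometrically as the projection onto $\mathrm{Ker}(A-\lambda_i I_n)$ along the complementary direct sum from Lemma \ref{Propr-partizione}(a); this is arguably more transparent, though it loses the polynomial-in-$A$ byproduct. Both arguments are valid.

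Two small touch-ups to your uniqueness paragraph. First, to show $A_i$ annihilates $\mathrm{Im}(A_j)$ for $j\ne i$, the relation to invoke is $A_iA_j=0$ (which gives $\mathrm{Im}(A_j)\subseteq\mathrm{Ker}(A_i)$); you cite $A_jA_i=0$, which gives the opposite inclusion $\mathrm{Im}(A_i)\subseteq\mathrm{Ker}(A_j)$ and is not what is needed here. Since a Frobenius system has both relations, the fix is trivial, but the citation is off. Second, you assert without proof that $A_i$ kills $\mathrm{Ker}(A)$; this deserves a line: by Lemma \ref{Propr-partizione}(c), any $v\in\mathrm{Ker}(A)$ can be written $v=(I_n-\sum_j A_j)w$, whence $A_iv=A_iw-\sum_j A_iA_jw=A_iw-A_iw=0$.
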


\begin{proof}
Indeed the first part follows directly from \ref{r'} and from \ref{Propr-partizione}. 

For the uniqueness, if $A = \sum_{i = 1}^p \lambda_i A_i$ is a Frobenius decomposition of $A$, then from \ref{Propr-partizione} the coefficients $\lambda_i$'s are necessarily the non-zero distinct eigenvalues of $A$ and from the properties of the $A_i$'s, arguing as in \cite{GaXu2002} Thm. 2.2, we get the matricial system
$$
\sum_{h=1}^p \lambda_h^m A_h = A^m, \  1 \le m \le p,
$$
whose associated matrix has non-zero determinant, because it is equal to the Vandermonde determinant of $\lambda_1 , \cdots , \lambda_p$ multiplied by $\lambda_1  \cdots  \lambda_p$. Hence the $A_i$'s are uniquely determined.
\end{proof}

\begin{remdef}\label{JC-moltiplicativa} 
a) Note that the matrix $M$ is not nilpotent if and only if $S(M) \ne 0$. In this case we refer to  the Frobenius covariants of $S(M)$ also as \emph{Frobenius covariants of} $M$.

b) The matrices $M$ and $S(M)$ above have the same distinct eigenvalues, so if $M$ is nonsingular, then $S(M)$ is nonsingular too. 

In this case we get easily the equality $S(M)^{-1} = \sum_{i=1}^r \sum_{j= 1}^{\rho_i} \lambda_{ij}^{-1} C_{ij}(M)$; this gives the Frobenius decomposition of the matrix $S(M)^{-1}$ which results polynomial in $M$. 

Moreover from the additive decomposition $M= S(M) + N(M)$, we get the  \emph{multiplicative Jordan-Chevalley decomposition} (or \emph{multiplicative SN decomposition}) $M=S(M) \, U(M)$ with $U(M)=(I_n+ S(M)^{-1}N(M)) \in M_n(\mathbb{K}^\dag)$ unipotent and $S(M), \, U(M)$ polynomials in $M$ (and therefore commuting).

Finally  $S(M), \, N(M)$ are the unique matrices with coefficients in $\overline{\mathbb{K}}$ such that $M=S(M) \, U(M)$, $S(M)$ semisimple, $U(M)$ unipotent and $S(M) \, U(M) = U(M) \, S(M)$ (see for instance \cite{Humph1975} Lemma B p.96).

By the way we note that in general $S(M)^h = \sum_{i=1}^r \sum_{j= 1}^{\rho_i} \lambda_{ij}^h C_{ij}(M)$ for every $h \in \mathbb{N}$ and that, if $M$ is nonsingular, then the same formula holds for every $h \in \mathbb{Z}$.
\end{remdef}

\section{Fine Jordan-Chevalley decomposition}

\begin{rem}
Let $S$ be a semisimple square matrix with coefficients in a generic field $\mathbb{L}$ and $\mathbb{F}$ be the splitting field of its minimal polynomial. Since $S$ is semisimple, its minimal polynomial has no multiple root in $\mathbb{F}$ (i.e. it is separable over $\mathbb{L}$). Hence $\mathbb{F}/\mathbb{L}$ is a Galois extension (see for instance \cite{Kapl1972} Part I, \S 5) and so the fixed field $\mathbb{L}^\dag$ of $Aut(\mathbb{F}/\mathbb{L})$ is exactly $\mathbb{L}$. 
\end{rem}

\begin{defi}\label{unbreakable}
We say that a semisimple matrix $S \in M_n(\mathbb{L})$ is \emph{unbreakable over} $\mathbb{L}$, if, whenever $S=A+B$ with $A, B \in M_n(\mathbb{L})$ semisimple matrices, $AB=BA=0$ and $Sp^*(A) \cap Sp^*(B) = \emptyset$, then $A=0$ or $B=0$.
\end{defi}

\begin{prop}\label{esistenza}
Let $S \in M_n(\mathbb{L})$ be a semisimple matrix.

1) $S$ is unbreakable over $\mathbb{L}$ if and only if $S=0$ (i.e. $Sp^*(S) = \emptyset$) or $Sp^*(S)$ consists in a single conjugacy class over $\mathbb{L}$.

2) If $Sp^*(S)$ consists of $l \ge 1$ conjugacy classes, then there exist $l$ unbreakable semisimple matrices, $S_1, \cdots, S_l \in M_n(\mathbb{L}) \setminus \{0\}$, such that 
$S = S_1 + \cdots + S_l$ and, for every $i \ne j$, $S_i S_j = 0$ and $Sp^*(S_i) \cap Sp^*(S_j) = \emptyset$.

3) If $S \ne 0$, its decomposition in unbreakable semisimple matrices stated in (2) is unique up to the order of the matrices $S_i$'s and we refer to it as \emph{the unbreakable semisimple decomposition of} $S$ and to matrices $S_i$'s as \emph{the unbreakable semisimple components of} $S$.
\end{prop}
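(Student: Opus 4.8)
The plan is to prove the three parts in order, using the Frobenius machinery already developed together with the Galois-theoretic description of conjugacy classes. The key structural fact is that a semisimple $S$ always admits a Frobenius decomposition (Proposition \ref{unicita-dec-Frob}), say $S = \sum_{k=1}^p \mu_k P_k$, where $\mu_1,\dots,\mu_p$ are exactly the distinct nonzero eigenvalues and the $P_k$ are the (uniquely determined) Frobenius covariants; moreover $Aut(\mathbb{F}/\mathbb{L})$ permutes the $\mu_k$'s according to their partition into conjugacy classes over $\mathbb{L}$, and since the $P_k$ are uniquely determined as polynomials in $S$ (they are certain $C_{ij}(S)$'s by Remark \ref{r'}), each $\varphi \in Aut(\mathbb{F}/\mathbb{L})$ sends $P_k$ to $P_{k'}$ whenever $\varphi(\mu_k)=\mu_{k'}$. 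Grouping the indices $1,\dots,p$ into the $l$ orbits $O_1,\dots,O_l$ of this action and setting $S_i = \sum_{k \in O_i} \mu_k P_k$, each $S_i$ is then fixed by every $\varphi$, hence lies in $M_n(\mathbb{L})$, is semisimple (a subsum of a Frobenius decomposition is itself a Frobenius decomposition, so Lemma \ref{Propr-partizione}(c) applies), and the relations $S_iS_j=0$, $Sp^*(S_i)\cap Sp^*(S_j)=\emptyset$ for $i\neq j$ are immediate from the Frobenius-system relations $P_kP_{k'}=\delta_{kk'}P_k$ and the fact that distinct orbits involve disjoint sets of eigenvalues. This simultaneously gives the existence in part (2) and, once part (1) is in hand, shows each $S_i$ is unbreakable (its nonzero spectrum is a single conjugacy class over $\mathbb{L}$).

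For part (1), the ``if'' direction: if $S=0$ this is trivial; if $Sp^*(S)$ is a single conjugacy class and $S=A+B$ with $A,B$ as in Definition \ref{unbreakable}, then by Lemma \ref{due-partizioni} the union of the Frobenius decompositions of $A$ and $B$ is a Frobenius decomposition of $S$; by the uniqueness of Frobenius covariants (Proposition \ref{unicita-dec-Frob}), this must coincide with the unique Frobenius decomposition of $S$, so $Sp^*(A)$ and $Sp^*(B)$ partition the single conjugacy class $Sp^*(S)$ into two $Aut(\mathbb{F}/\mathbb{L})$-stable (each is $Sp^*$ of an $\mathbb{L}$-matrix, hence a union of conjugacy classes) subsets; but a single conjugacy class is one orbit and cannot be split nontrivially into stable subsets, so one of $Sp^*(A)$, $Sp^*(B)$ is empty, i.e. $A=0$ or $B=0$. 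The ``only if'' direction: if $Sp^*(S)$ consists of $l\ge 2$ conjugacy classes, the construction above with $l\ge 2$ produces $S=S_1+(S_2+\cdots+S_l)$, a nontrivial decomposition of the forbidden type (both summands nonzero, semisimple over $\mathbb{L}$, product zero, disjoint nonzero spectra), so $S$ is not unbreakable; and if $l=1$ but this single class is empty then $S=0$, already covered. Thus $S$ is unbreakable iff $l\le 1$ in the appropriate sense, which is the stated dichotomy.

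For part (3), uniqueness: suppose $S = T_1 + \cdots + T_m$ is another decomposition into nonzero unbreakable semisimple $T_i \in M_n(\mathbb{L})$ with $T_iT_j=0$ and $Sp^*(T_i)\cap Sp^*(T_j)=\emptyset$ for $i\neq j$. Each $T_i$ is nonzero unbreakable, so by part (1) its nonzero spectrum is a single conjugacy class over $\mathbb{L}$. By Lemma \ref{due-partizioni}, concatenating the Frobenius decompositions of all the $T_i$'s gives a Frobenius decomposition of $S$; by the uniqueness in Proposition \ref{unicita-dec-Frob} it equals the unique one, $\sum_k \mu_k P_k$. Hence the family $\{Sp^*(T_i)\}$ is precisely a partition of $Sp^*(S)$ into conjugacy classes, which (conjugacy classes being disjoint) must be the partition into the $l$ classes $O_1,\dots,O_l$; so $m=l$ and, after renumbering, $Sp^*(T_i)=O_i$. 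Finally $T_i$ is the sum over $k\in O_i$ of $\mu_k$ times the covariant of $S$ attached to $\mu_k$ (again by the uniqueness of covariants applied to $T_i$, whose covariants are among those of $S$), which is exactly $S_i$. The main obstacle is the bookkeeping in part (1)'s ``only if'' direction and throughout part (3): one must be careful that $Sp^*$ of a matrix over $\mathbb{L}$ is automatically a union of conjugacy classes over $\mathbb{L}$ (this is Notations \ref{pol-min}(c), applied to the minimal polynomial of the matrix in question) and that the permutation action of $Aut(\mathbb{F}/\mathbb{L})$ on eigenvalues lifts, via uniqueness of covariants, to the same permutation on the $P_k$'s, so that grouping by orbit really does yield $\mathbb{L}$-matrices — everything else is a direct application of Lemma \ref{due-partizioni} and Proposition \ref{unicita-dec-Frob}.
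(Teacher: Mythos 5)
Your proposal is correct and follows essentially the same route as the paper: group the Frobenius covariants by $Aut(\mathbb{F}/\mathbb{L})$-orbit to build the $S_i$'s, use Lemma \ref{due-partizioni} plus the uniqueness of the Frobenius decomposition to prove the ``if'' direction of (1) and the uniqueness in (3), and derive ``only if'' by splitting the $l$-term decomposition into two nonzero blocks. The only small stylistic difference is that you obtain $\varphi(P_k)=P_{k'}$ from the uniqueness of the Frobenius decomposition applied to $\varphi(S)=S$, whereas the paper quotes the explicit Galois-equivariance of the B\'ezout polynomials $C_{ij}$ from the proof of Theorem \ref{SN-dec}; both arguments are valid and interchangeable.
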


\begin{proof}
Assume that $S=A+B$ with $A, B \in M_n(\mathbb{L})$ semisimple matrices, $AB=BA=0$ and $Sp^*(A) \cap Sp^*(B) = \emptyset$ with $A \ne 0$ and $B \ne 0$. Let $A = \sum_{i=1}^p \alpha_i A_i$ and $B = \sum_{j=1}^q \beta_j B_j$ be their Frobenius decompositions. Since $Sp^*(A) \cap Sp^*(B) = \emptyset$, by \ref{due-partizioni} we get that  $\sum_{i=1}^p \alpha_i A_i + \sum_{j=1}^q \beta_j B_j$ is the Frobenius decomposition of $S=A+B$. In particular, by \ref{Propr-partizione}, $\alpha_1$ and $\beta_1$ are both eigenvalues of $S$ with distinct conjugacy classes over $\mathbb{L}$: indeed the class of $\alpha_1$ in contained in $Sp^*(A)$ and the class of $\beta_1$ in contained in $Sp^*(B)$. This concludes part ``if'' of (1).

Now if $S$ is semisimple and nonzero, then we can consider its Frobenius decomposition: $\sum_{i=1}^l \sum_{j=1}^{\rho_i} \lambda_{ij} C_{ij}$, where $C_{ij}$'s are the Frobenius covariants of $S$ and $\{\lambda_{i1}, \cdots , \lambda_{i\rho_i}\}$, $1 \le i \le l$, are the $l$ conjugacy classes of nonzero eigenvalues of $S$.

For every $i$ we denote: $S_i = \sum_{j=1}^{\rho_i} \lambda_{ij} C_{ij}$.
These matrices have coefficients in $\mathbb{L}$.

For, since $\mathbb{F}/\mathbb{L}$ is a Galois extension, it suffices to check that every $S_i$ is $Aut(\mathbb{F}/\mathbb{L})$-invariant.

Arguing as in the proof of \ref{SN-dec}, if $\varphi \in Aut(\mathbb{F} / \mathbb{L})$ and $\sigma_i^{\varphi}$, $1 \leq i \le l$, is the permutation induced by $\varphi$ on $\{ 1, \cdots , \rho_i \}$, then $\varphi(\lambda_{ij}) = \lambda_{i \, \sigma_i^{\varphi}(j)}$ and $\varphi(C_{ij}) = C_{i \, \sigma_i^{\varphi}(j)}$. Hence $\varphi (S_i) = \sum_{j=1}^{\rho_i} \lambda_{i \sigma_i^{\varphi}(j)} C_{i \, \sigma_i^{\varphi}(j)} = S_i$ and therefore $S_i$ has coefficients in $\mathbb{L}$.

These matrices are unbreakable because of part ``if'' of (1), while the remaining properties follow easily from the properties of $C_{ij}$'s and from \ref{Propr-partizione}. This completes part (2).

Part ``only if'' of (1) is a direct consequence of (2).

For (3), let $S = S_1' + \cdots + S_{l'}'$ be another decomposition with every $S'_i \in M_n(\mathbb{L}) \setminus \{0\}$ semisimple and unbreakable over $\mathbb{L}$ and such that, for every $i \ne j$, $S'_i S'_j = 0$ and $Sp^*(S'_i) \cap Sp^*(S'_j) = \emptyset$.

By \ref{due-partizioni}, the sum of the Frobenius decompositions of the matrices $S'_j$'s is the Frobenius decomposition of $S$. But each matrix $S'_j$ is unbreakable and, by (1), it is uniquely determined by a conjugacy class over $\mathbb{L}$ of nonzero eigenvalues of $S$. Since the same fact holds for the matrices $S_i$'s, by uniqueness of the Frobenius decomposition of $S$, we get that $l=l'$ and, up to change order, $S_i = S'_i$ for every $i$.
\end{proof}

\begin{notas}[{\bf and remarks}]\label{S_i(M)N_i(M)}
a) Remembering the same notations as in \ref{pol-min}, in the remaining part of this section we assume that the fixed matrix $M \in M_n(\mathbb{K})$ is not nilpotent; this is equivalent both to $S(M) \ne 0$ and to $r' \ge 1$ (remember \ref{r'}).

b) By \ref{Propr-partizione}, the eigenspace in  $\overline{\mathbb{K}}^n$, relative to the eigenvalue $\lambda_{ij}$ of $S(M)$, is $Im \, C_{ij}(M)$. 
Hence $\overline{\mathbb{K}}^n = \oplus_{i=1}^r \oplus_{j=1}^{\rho_i} Im \, C_{ij}(M)$ as vector spaces over $\overline{\mathbb{K}}$ (remember that, by \ref{Propr-partizione} (c), if $\lambda_{r1}= 0$, then $Im \, C_{r1}(M) = Ker \, S(M)$).

c) If the polynomials $C_{ij}(X)$'s are as in \ref{ident-bezout-per-M}, we define the polynomials 
$$
S_i(X) = \sum_{j=1}^{\rho_i} \lambda_{ij} C_{ij}(X)
$$
for every $i=1, \cdots , r'$ and 
$$N_i(X) = \sum_{j=1}^{\rho_i} (X-\lambda_{ij}) C_{ij}(X)
$$
for every $i=1, \cdots , r$

and the related matrices $S_i(M)$ and $N_i(M)$. 

We note that $\sum_{j=1}^{\rho_i} \lambda_{ij} C_{ij}(M)$ is the Frobenius decomposition of $S_i(M)$ and that $N_i(M)^{\eta_i} = \sum_{j=1}^{\rho_i} (M- \lambda_{ij} I_n)^{\eta_i} C_{ij}(M) =0$, because $\sum_{j=1}^{\rho_i} (X- \lambda_{ij})^{\eta_i} C_{ij}(X)$ is multiple of the minimal polynomial $m(X)$ of $M$. Moreover each $N_i(M)$ is a polynomial expression of $M$ of degree at most $deg (m(X))$.
\end{notas}

\begin{prop}\label{propr-Si-Ni} a) For every $i$, 
$S_i(X)$ and $N_i(X)$ have coefficients in $\mathbb{K}^\dag$, 

$deg(S_i(X)) < deg(m(X))$, $deg(N_i(X)) \le deg(m(X))$ and
$$S(M) = \sum_{i=1}^{r'} S_i(M), \ \ \ N(M) = \sum_{i=1}^r N_i(M).$$

b) For all admissible indices $i, h, l$, if $u \in Im\, C_{hl}(M)$, we have: $S_i(M)u= \delta_{ih} \lambda_{hl} u$ and $N_i(M)u= \delta_{ih} N(M)u$.

c) $S_i(M) S_j(M) =0$ as soon as $i \ne j$.

d) $Ker \, S(M) = Ker\, M^{\eta_r}$.

e) If $r \ge 2$, the minimal polynomial of each $S_i(M)$ is $X g_i(X)$.
\end{prop}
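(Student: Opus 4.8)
The plan is to prove the five statements essentially in the order given, using throughout that the $C_{ij}(M)$'s form a Frobenius system with $\sum_{i,j}C_{ij}(M)=I_n$ (Proposition \ref{ident-bezout-per-M}(b)), the induced direct sum $\overline{\mathbb{K}}^n=\bigoplus_{i,j} Im\, C_{ij}(M)$ (Notations \ref{S_i(M)N_i(M)}(b)), and the eigenvalue/image dictionary of Lemma \ref{Propr-partizione}. Parts (a) and (c) are essentially bookkeeping. For (a), I would recall from the proof of Theorem \ref{SN-dec} that every $\varphi\in Aut(\mathbb{F}/\mathbb{K})$ acts by $\varphi(\lambda_{ij})=\lambda_{i\,\sigma_i^{\varphi}(j)}$ and $\varphi(C_{ij}(X))=C_{i\,\sigma_i^{\varphi}(j)}(X)$; applying $\varphi$ term by term to the two defining sums then gives $\varphi(S_i(X))=S_i(X)$ and $\varphi(N_i(X))=N_i(X)$, so both polynomials lie in $\mathbb{K}^\dag[X]$. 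The degree bounds are immediate from $deg\, C_{ij}(X)<deg\, m(X)$, and the summation identities follow at once: $\sum_{i=1}^{r'}S_i(M)=\sum_{i,j}\lambda_{ij}C_{ij}(M)=S(M)$ (in the singular case the extra term $\lambda_{r1}C_{r1}(M)$ vanishes), while $\sum_{i=1}^{r}N_i(M)=M\sum_{i,j}C_{ij}(M)-\sum_{i,j}\lambda_{ij}C_{ij}(M)=M-S(M)=N(M)$. For (c), $S_i(M)S_j(M)=\sum_{k,l}\lambda_{ik}\lambda_{jl}\,C_{ik}(M)C_{jl}(M)=0$ whenever $i\neq j$, since then $(i,k)\neq(j,l)$ for all $k,l$.

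For (b), the point is that $u\in Im\, C_{hl}(M)$ forces $C_{ij}(M)u=\delta_{(i,j),(h,l)}\,u$, by idempotency of $C_{hl}(M)$; hence $S_i(M)u=\sum_j\lambda_{ij}\delta_{(i,j),(h,l)}u=\delta_{ih}\lambda_{hl}u$ and, likewise, $S(M)u=\lambda_{hl}u$, so $N(M)u=(M-\lambda_{hl}I_n)u$, and finally $N_i(M)u=\sum_j(M-\lambda_{ij}I_n)\delta_{(i,j),(h,l)}u=\delta_{ih}(M-\lambda_{hl}I_n)u=\delta_{ih}N(M)u$.

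For (d), I would first dispose of the nonsingular case, where $Ker\, M^{\eta_r}$ and $Ker\, S(M)$ are both $\{0\}$; so assume $m_r(X)=X$, $r'=r-1$. Since $\sum_{i,j}C_{ij}(M)=I_n$ gives $I_n-\sum_{(i,j)\neq(r,1)}C_{ij}(M)=C_{r1}(M)$, Lemma \ref{Propr-partizione}(c) yields $Ker\, S(M)=Im\, C_{r1}(M)$. For the inclusion $Ker\, S(M)\subseteq Ker\, M^{\eta_r}$ I would use the polynomial identity $X^{\eta_r}C_{r1}(X)=X^{\eta_r}B_{r1}(X)G_{r1}(X)=B_{r1}(X)m(X)$, which gives $M^{\eta_r}C_{r1}(M)=0$. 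For the reverse inclusion, given $M^{\eta_r}u=0$, write $u=\sum_{i,j}C_{ij}(M)u$; since $M$ commutes with each $C_{ij}(M)$, each $Im\, C_{ij}(M)$ is $M$-invariant and hence $M^{\eta_r}C_{ij}(M)u=0$ for every $(i,j)$; for $(i,j)\neq(r,1)$ the matrix $S(M)$ acts on $Im\, C_{ij}(M)$ as $\lambda_{ij}I_n$ with $\lambda_{ij}\neq0$, so $M=S(M)+N(M)$, an invertible operator plus a commuting nilpotent one, is invertible on $Im\, C_{ij}(M)$, forcing $C_{ij}(M)u=0$; thus $u=C_{r1}(M)u\in Ker\, S(M)$.

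For (e), I would invoke that $\sum_{j=1}^{\rho_i}\lambda_{ij}C_{ij}(M)$ is a Frobenius decomposition of $S_i(M)$, so by Lemma \ref{Propr-partizione}(b) the matrix $S_i(M)$ is semisimple with distinct nonzero eigenvalues $\lambda_{i1},\cdots,\lambda_{i\rho_i}$; when $r\geq2$ one can pick $(k,l)$ with $k\neq i$, and since $rk\, C_{kl}(M)\geq1$ while the images of all the $C_{kl}(M)$'s form a direct sum equal to $\overline{\mathbb{K}}^n$, one gets $\sum_{j=1}^{\rho_i}rk\, C_{ij}(M)<n$, so Lemma \ref{Propr-partizione}(d) makes $0$ an eigenvalue of $S_i(M)$ as well; being semisimple, $S_i(M)$ then has minimal polynomial $X\prod_{j=1}^{\rho_i}(X-\lambda_{ij})=X g_i(X)$. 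I expect the only real obstacle to be part (d), which needs both the identity $X^{\eta_r}C_{r1}(X)=B_{r1}(X)m(X)$ and the structural remark that $M$ restricts to an invertible operator on each $Im\, C_{ij}(M)$ with $\lambda_{ij}\neq0$; everything else reduces to routine manipulations inside the Frobenius system.
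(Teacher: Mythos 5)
Your proof is correct. Parts (a), (b), (c) and (e) follow essentially the same route as the paper: (a) via $Aut(\mathbb{F}/\mathbb{K})$-invariance of the defining polynomial sums, (b) and (c) by direct computation in the Frobenius system, and (e) by noting that $0$ is an eigenvalue of $S_i(M)$ once $r\ge 2$ and that $S_i(M)$ is semisimple.

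Part (d) is where you diverge from the paper, and both of your two inclusions are handled by genuinely different arguments. For $Ker\, S(M) \subseteq Ker\, M^{\eta_r}$ you observe the polynomial identity $X^{\eta_r}C_{r1}(X)=B_{r1}(X)m(X)$, which immediately gives $M^{\eta_r}C_{r1}(M)=0$; the paper instead writes $M\,C_{r1}(M)=N_r(M)\,C_{r1}(M)$ and then raises to the $\eta_r$-th power using nilpotency of $N_r(M)$. Your identity is the cleaner of the two. For $Ker\, M^{\eta_r} \subseteq Ker\, S(M)$, the paper explicitly constructs a matrix $W$ with $S(M)=W\,M^{\eta_r}$ by factoring each $G_{ij}(X)$ as $W_{ij}(X)\,X^{\eta_r}$; you instead decompose $u=\sum_{i,j}C_{ij}(M)u$, note that each $Im\,C_{ij}(M)$ is $M$-invariant, and argue that on the summands with $\lambda_{ij}\ne0$ the restriction of $M$ is a nonzero scalar plus a commuting nilpotent, hence invertible, so those components of $u$ must vanish. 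This is a more structural, less computational argument, and it has the advantage of making transparent why only the $C_{r1}(M)$-component of $u$ can survive; the paper's construction of $W$, on the other hand, is re-used almost verbatim in the subsequent proof of Proposition \ref{N_i-uniche}(c), which is why the authors set it up here. Either route is sound.
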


\begin{proof}
The polynomials have coefficients in $\mathbb{K}^\dag$: indeed, arguing as in \ref{SN-dec}, they
are $Aut(\mathbb{F}/\mathbb{K})$-invariant. 
The inequalities on the degrees follow from the degrees of $C_{ij}(X)$'s  (remember \ref{ident-bezout-per-M}).
The additive decompositions of $S(M)$ and $N(M)$ follow from the definitions. This concludes (a).

We get (b) by multiplying $S_i(M)$ and $N_i(M)$ on the right with an element of the form $u = C_{hl}(M) v$ and by remembering the definitions and the properties of the involved matrices.
Again a direct computation allows to get (c).

Assertion (d) is trivial, when $M$ is nonsingular.
Otherwise we have: $\lambda_{r1}=0$, $\rho_r=1$ and $r'=r-1$.
In this case we want to prove that there exists a matrix $W$  such that $S(M) = W M^{\eta_r}$. This implies that $Ker \, M^{\eta_r} \subseteq Ker \, S(M)$.

We have: 

$m(X) = [\, \prod_{h=1}^{r-1} \prod_{l=1}^{\rho_h}(X-\lambda_{hl})^{\eta_h}] X^{\eta_r}$, 
so 
$G_{ij}(X) = \dfrac{\prod_{h=1}^{r-1} \prod_{l=1}^{\rho_h}(X-\lambda_{hl})^{\eta_h}}{(X-\lambda_{ij})^{\eta_i}} X^{\eta_r}$. 

Since the fractions $W_{ij}(X) = \dfrac{\prod_{h=1}^{r-1} \prod_{l=1}^{\rho_h}(X-\lambda_{hl})^{\eta_h}}{(X-\lambda_{ij})^{\eta_i}}$ are actually polynomials for every $i \le r-1$, we can conclude that $G_{ij}(M) = W_{ij}(M) M^{\eta_r}$.

Hence we get the desired assertion with $W= \sum_{i=1}^{r-1} \sum_{j=1}^{\rho_i} \lambda_{ij}B_{ij}(M) W_{ij}(M)$.

For the opposite inclusion we have: $M C_{r1}(M) = S(M) C_{r1}(M) + N(M) C_{r1}(M)$ and so $M C_{r1}(M) = N_r(M) C_{r1}(M)$. Hence $M^{\eta_r} C_{r1}(M) = (M C_{r1}(M))^{\eta_r} = N_r(M)^{\eta_r} C_{r1}(M)=0$ since $N_r(M)$ is nilpotent of order $\eta_r$. We can conclude because $Im \, C_{r1}(M) = Ker \, S(M)$, by \ref{Propr-partizione}.

Finally \ref{Propr-partizione} implies that, for every index $i$, the set of all eigenvalues of $S_i(M)$ is $\{ \lambda_{i1}, \cdots , \lambda_{i \rho_i}, 0 \}$, because $r \ge 2$, and that $S_i(M)$ is semisimple, so its minimal polynomial is $X g_i(X)$. This concludes (e).
\end{proof}

\begin{rem}\label{S_i-unbreakable}
If $S(M) \in M_n(\mathbb{K}^\dag) \setminus \{0\}$ is the (nonzero) semisimple part of $M$, then $S_1(M), \cdots , S_{r'}(M) \in M_n(\mathbb{K}^\dag)  \setminus \{ 0\}$ are the unbreakable semisimple components of $S(M)$. Each matrix $S_i(M)$ is a polynomial expression of $M$ of degree at most $deg(m(X)) -1$.
This fact follows directly by \ref{esistenza}.
\end{rem}

\begin{prop}\label{N_i-uniche}
Let $N(M)$ be the nilpotent part of $M$. Then the matrices  $N_1(M), \cdots , N_r(M)$ are in  $M_n(\mathbb{K}^\dag)$ and are uniquely determined in $M_n(\overline{\mathbb{K}})$ by the conditions:

a) $N(M) = N_1(M) + \cdots + N_r(M)$;

b) $N_h(M) S_l(M) = 0$ as soon as $h \ne l$;

c) for every $h=1, \cdots , r'$, there is a matrix \, $W_h \in M_n(\overline{\mathbb{K}})$ such that

$N_h(M) = W_h M^{\eta_r}$.
\end{prop}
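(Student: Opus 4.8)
The plan is to dispose of the (easy) existence part quickly and then concentrate on uniqueness, where the content lies. \emph{Existence.} That each $N_i(M)$ lies in $M_n(\mathbb{K}^\dag)$ and property (a) are precisely \ref{propr-Si-Ni}(a). Property (b) is a one-line computation: expanding $N_h(M)S_l(M)=\big[\sum_j(M-\lambda_{hj}I_n)C_{hj}(M)\big]\big[\sum_k\lambda_{lk}C_{lk}(M)\big]$ and using $C_{hj}(M)C_{lk}(M)=0$ whenever $(h,j)\ne(l,k)$ — in particular whenever $h\ne l$ — every summand vanishes (one may also read it off \ref{propr-Si-Ni}(b)). For property (c), when $M$ is singular one recycles the computation in the proof of \ref{propr-Si-Ni}(d): there $G_{ij}(M)=W_{ij}(M)M^{\eta_r}$ for $i\le r'=r-1$ with $W_{ij}(X)$ a genuine polynomial, hence $C_{ij}(M)=B_{ij}(M)W_{ij}(M)M^{\eta_r}$ and therefore $N_h(M)=W_hM^{\eta_r}$ with $W_h=\sum_{j=1}^{\rho_h}(M-\lambda_{hj}I_n)B_{hj}(M)W_{hj}(M)$; when $M$ is nonsingular $r'=r$, $Ker\,S(M)=\{0\}$, and (c) is automatic (read $M^{\eta_r}$ as $I_n$ and $W_h=N_h(M)$).

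\emph{Uniqueness.} Let $N_1',\dots,N_r'\in M_n(\overline{\mathbb{K}})$ satisfy (a), (b), (c), the matrices $S_l(M)$ and the power $M^{\eta_r}$ being the fixed ones of the construction. The backbone is the eigenspace splitting $\overline{\mathbb{K}}^n=\big(\bigoplus_{i=1}^{r'}\bigoplus_{j=1}^{\rho_i}Im\,C_{ij}(M)\big)\oplus Ker\,S(M)$, obtained from \ref{Propr-partizione}(a) applied to the Frobenius decomposition of $S(M)$ recalled in \ref{r'}, together with two observations from \ref{propr-Si-Ni}: (i) $S_h(M)$ acts as the nonzero scalar $\lambda_{hj}$ on $Im\,C_{hj}(M)$, so $Im\,C_{hj}(M)\subseteq Im\,S_h(M)$, and as $0$ on the remaining summands and on $Ker\,S(M)$ (part (b)); (ii) $M^{\eta_r}$ annihilates $Ker\,M^{\eta_r}=Ker\,S(M)$ (part (d)). Then I argue summand by summand. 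For $v\in Im\,C_{hj}(M)$ with $h\le r'$: for every $l\ne h$ condition (b) gives $N_l'S_h(M)=0$, and since $v\in Im\,S_h(M)$ we get $N_l'v=0$; so (a) forces $N_h'v=N(M)v$. For $v\in Ker\,S(M)$: condition (c) gives $N_h'=W_hM^{\eta_r}$ for every $h\le r'$, which annihilates $v$ by (ii), so $N_h'v=0$ for $h\le r'$, and (a) forces $N_r'v=N(M)v$. These relations pin down every $N_l'$ on all of $\overline{\mathbb{K}}^n$; and the identical description — $N_h$ equals $N(M)$ on $Im\,C_{hj}(M)$ for $h\le r'$ and on $Ker\,S(M)$ for $h=r$, and vanishes on the remaining summands — is satisfied by the $N_l(M)$'s themselves by \ref{propr-Si-Ni}(b). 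Hence $N_l'=N_l(M)$ for all $l$.

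The point deserving care — and essentially the only one — is that condition (b) says nothing about the behaviour of the $N_h'$ on $Ker\,S(M)$, because in the singular case there is no matrix $S_r(M)$ available (it would be the zero matrix): condition (c) is exactly what repairs this gap, and one must keep the indexing coherent so that the index $r$ always refers to the eigenvalue $0$ (the factor $m_r(X)=X$) and that the degenerate subcase ``$M$ nonsingular'' — where $Ker\,S(M)=\{0\}$ and (c) carries no information — is subsumed by reading $M^{\eta_r}$ as $I_n$. Apart from this, no estimates or new constructions enter; everything is bookkeeping on the direct sum above, invoking \ref{Propr-partizione}, \ref{ident-bezout-per-M} and \ref{propr-Si-Ni}.
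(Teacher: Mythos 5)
Your proof is correct and follows essentially the same route as the paper: existence is recycled from the $G_{ij}(M)=W_{ij}(M)M^{\eta_r}$ factorization already used for Proposition~\ref{propr-Si-Ni}(d), and uniqueness is established summand by summand on the direct sum $\overline{\mathbb{K}}^n=\bigoplus_{h,j}Im\,C_{hj}(M)$ using (b) on the nonzero-eigenvalue blocks and (c) on $Ker\,S(M)$, then matching against $N_l(M)$ via \ref{propr-Si-Ni}(b). Your presentation of the uniqueness step is slightly more transparent than the paper's chain of equalities, but the underlying argument is the same.
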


\begin{proof}
Standard computations show that the matrices $N_h(M)$'s satisfy (a) and (b).

Part (c) is trivial, if $M$ is nonsingular. Otherwise, as in the proof of \ref{propr-Si-Ni} (d), for every $h \le r'$, we have $G_{hj}(M) = W_{hj}(M) M^{\eta_r}$. 

Hence, after posing $W_h= \sum_{j=1}^{\rho_h} (M-\lambda_{hj} I_n) B_{hj}(M) W_{hj}(M)$, we get $N_h(M) = W_h M^{\eta_r}$ and this concludes (c).

Now let $N_1, \cdots, N_r $ be matrices in $M_n(\overline{\mathbb{K}})$ satisfying (a), (b), (c).

From the decomposition $\overline{\mathbb{K}}^n = \oplus_{h=1}^r \oplus_{l=1}^{\rho_h} Im \, C_{hl}(M)$ of \ref{Propr-partizione} and \ref{propr-Si-Ni} (b), 
it suffices to check that $N_iv = \delta_{ih} N(M)v$ for every $v \in Im \, C_{hl}(M)$ and for all admissible indices  $h, i, l$. 

Assume first that $h \le r'$, so $\lambda_{hl} \ne 0$. Then $\delta_{ih} N(M) v =\delta_{ih} N(M) S_h(M) \dfrac{v}{\lambda_{hl}} = \delta_{ih}[N_1  S_h(M) \dfrac{v}{\lambda_{hl}} + \cdots +  N_r S_h(M) \dfrac{v}{\lambda_{hl}}] = \delta_{ih} N_h S_h(M) \dfrac{v}{\lambda_{hl}} = N_i  S_h(M) \dfrac{v}{\lambda_{hl}} = N_i v$ 
as requested, by condition (b). This completes the proof in case of $r'=r$.

Assume now that $r' = r-1$, so $\rho_r = 1$, $\lambda_{r1} =0$  and $Im \, C_{r1}(M) = Ker\, S(M) = Ker (M^{\eta_r})$ by \ref{pol-min} and \ref{propr-Si-Ni} (d).

Let $v \in Im \, C_{r1}(M)$ i.e. $M^{\eta_r} v= 0$. The condition (c) gives that $N_i (M)v = N_i v =0$ for every $i \le r'=r-1$, hence $N_i = N_i(M)$ for every $i \le r-1$. Since $N_1 + \cdots + N_r = N_1(M) + \cdots + N_r(M)$, we get also that $N_r = N_r(M)$, as requested.
\end{proof}

\begin{remdef}\label{FineSN}
The previous results assert the existence and the uniqueness of the decompositions 
$S(M) = S_1(M) + \cdots + S_{r'}(M)$ and 
$N(M) = N_1(M) + \cdots + N_r(M)$. 

We call the consequent decomposition 
\begin{center}
$M = S_1(M) + \cdots + S_{r'}(M) + N_1(M) + \cdots + N_r(M)$
\end{center}
\emph{the fine Jordan-Chevalley decomposition} (or \emph{fine SN decomposition}) \emph{of} $M$ and the matrices $S_i(M)$'s and $N_h(M)$'s \emph{the fine components} respectively of $S(M)$ and of $N(M)$.

When $0$ is an eigenvalues of $M$ (so $r'=r-1$, $\rho_r = 1$ and $\lambda_{r1}= 0$), we agree that also the null matrix $S_r(M)= \lambda_{r1} C_{r1}(M) =0$ is a fine component of $S(M)$. Hence the fine components of $S(M)$ and of $N(M)$ are always $r$. This agreement will allow to simplify the language and the statements of the next section. Indeed every $S_i(M)$ and every $N_i(M)$ corresponds to a conjugacy class over $\mathbb{K}$ of eigenvalues of $M$.
\end{remdef}

\section{A Schwerdtfeger-type formula}

\begin{remdef}\label{val-ass}
a) An \emph{absolute value} over $\mathbb{K}$ is a function $|.|: \mathbb{K} \mapsto \mathbb{R}$ $x \to |x|$ such that

$|x| \ge 0$ for every $x \in \mathbb{K}$ and $|x|=0$ if and only if $x=0$;

$|x+y| \leq |x| + |y|$ for every $x,y$;

$|xy|=|x||y|$ for every  $x,y$.

We call such a pair $(\mathbb{K}, |.|)$ \emph{a valued field}.
We refer for instance to \cite{War1989} Ch.III, Ch.IV and to \cite{Lang2002} Ch.XII for more information.

In particular we recall that we can define an absolute value over every field, by putting $|x|=1$ for every $x \ne 0$, this is called \emph{trivial absolute value}.

The absolute value of a valued field always extends in a unique way to its \emph{completion} (with absolute value denoted again by $|.|$). Therefore it is not restrictive to assume that the valued field is already complete. Moreover if the absolute value is not trivial and the valued field is complete, then it extends in a unique way to its algebraic closure. We denote the extended absolute value by the same notation.

Finally a non-trivial absolute value over a complete valued field $\mathbb{K}$ is constant on every conjugacy class over $\mathbb{K}$ (see for instance \cite{Lang2002} Ch.XII Prop. 2.6).

b) Let $(\mathbb{K}, |.|)$ be a complete valued field endowed with a non-trivial absolute value. 
We can consider on $M_n(\overline{\mathbb{K}})$ any norm, $\| . \|$, which is \emph{compatible with the absolute value}, i.e. $\|\lambda A \|= |\lambda| \| A\|$ for every $\lambda \in \overline{\mathbb{K}}$ and every $A \in M_n(\overline{\mathbb{K}})$. 

The restriction of this norm to $M_n(\mathbb{K})$ is equivalent to every other norm over $M_n(\mathbb{K})$ and induces the product topology of $M_n(\mathbb{K})$, viewed as a product space (see for instance \cite{Lang2002} Ch.XII Prop.2.2). Hence $M_n(\mathbb{K})$ is a complete metric space.

If the above norm over $M_n(\overline{\mathbb{K}})$ is \emph{submultiplicative} (i.e. $\|AB|| \le \|A \|\ \,\|B\|$ for every pair of matrices $A, B$), then, arguing as in \cite{HoJ2013} pp.347--348, standard computations show that if $\lambda \in \overline{\mathbb{K}}$ is any eigenvalue of $A \in M_n(\overline{\mathbb{K}})$, then $|\lambda| \le \| A \|$ and moreover it is possible to prove that  the \emph{spectral radius} of $A$ is  

$\rho(A) = inf \{ \| A \|' \ / \  
\| . \|' \mbox{ is a submultiplicative norm on } M_n(\overline{\mathbb{K}})
\mbox{ compatible with } |.| \}$.

c) Let $(\mathbb{K}, |.|)$ be a complete valued field endowed with a non-trivial absolute value.
Let $f(X)=\sum_{m=0}^\infty a_m X^m$, $a_m \in \mathbb{K}$ be a series, to which we can associate the real series $\sum_{m=0}^\infty |a_m| X^m$, whose radius of convergence, $R_f \in \mathbb{R} \cup \{+ \infty\}$, is the supremum of the real numbers $t \ge 0$ such that $|a_m| t^m$ is upper bounded. 

We call $R_f$ the \emph{radius of convergence of} $f$.

Now let $\Omega_{f, \mathbb{K}}$ be the set of matrices $A \in M_n(\mathbb{K})$ such that $\rho(A) < R_f$. 

We remark that $\Omega_{f, \mathbb{K}}$ can be characterized as the set of matrices $A \in M_n(\mathbb{K})$ such that there exists a submultiplicative norm $\|.\|$ on $M_n(\overline{\mathbb{K}})$, compatible with $|.|$,  such that $\|A \| < R_f$.

Moreover $\Omega_{f, \mathbb{K}}$ is an open subset of $M_n(\mathbb{K})$ and, if $M \in \Omega_{f, \mathbb{K}}$, then both semisimple and nilpotent parts of $M$ and their related fine components belong to $\Omega_{f, \mathbb{K}}$.

Again, if $M \in \Omega_{f, \mathbb{K}}$, then the series $f(M)$ converges to a matrix in $M_n(\mathbb{K})$, being this last complete.
Moreover if $\lambda$ is an eigenvalue of $M$, then $\lambda$ is in the splitting field, $\mathbb{F}$, of the minimal polynomial of $M$, which is complete, because it is a finite extension of $\mathbb{K}$ (see for instance \cite{Lang2002} Ch.XII Prop.2.5) and $\lambda \in D_f = \{ \alpha \in \mathbb{F} \ / \ |\alpha| < R_f \}$. Hence $f(\lambda)$ is a convergent series to an element of $\mathbb{F}$.

d) We can write any polynomial $f(X)$, having coefficients in any field $\mathbb{K}$, as series  with infinite null coefficients. In this case we agree that the \emph{radius of convergence of} $f$ is $R_f = + \infty$, so $\Omega_{f, \mathbb{K}} = M_n(\mathbb{K})$ and $D_f = \mathbb{F}$.

e) Let $f(X)=\sum_{m=0}^\infty a_m X^m$ be either a polynomial over any field $\mathbb{K}$ or a series, having coefficients in a complete valued field $\mathbb{K}$ endowed with a non-trivial absolute value. 

We denote by $R_f$ its radius of convergence and by $\Phi_k(X)$ the functions of the same type of $f(X)$, given by

$
\Phi_k(X) = \sum_{m=k}^\infty {m \choose k} a_m X^{m-k}
$,
where ${m \choose k} = \overbrace{1 + \cdots +1}^{{m \choose k} \mbox{ times }}$  ($1$ the unity in $\mathbb{K}$).

Standard computations show that

i) the radius of convergence of every $\Phi_k(X)$ is at least $R_f$ and $\Phi_0(X) = f(X)$;

ii) if $char(\mathbb{K})=0$, then $\Phi_k(X) = \dfrac{1}{k!}\dfrac{d^k}{dX^k}f(X)$ for every $k \ge 1$;

iii) if $char(\mathbb{K})$ is positive, then $\Phi_k(X) = \dfrac{1}{k!}\dfrac{d^k}{dX^k}f(X)$ for every $1 \le k < char(\mathbb{K})$,

where the $k$-th derivative denotes the series obtained by differentiating $k$ times term-by-term.

With same notations and the same arguments as in (c) and (d), by (i) we get that $\Phi_k(\lambda)$ is a convergent series in $\mathbb{F}$ , for every $k \ge 1$ and every $\lambda \in D_f$.
\end{remdef}

\begin{prop}\label{Schw-Sylv}
With the same notations as in \ref{pol-min} and in \ref{val-ass}, let $f(X)$ be either a polynomial over any $\mathbb{K}$ or a series over $\mathbb{K}$ supposed to be a complete valued field with respect to a non-trivial absolute value.

If $M \in \Omega_{f, \mathbb{K}}$ and $\lambda$ is an eigenvalue of $M$ (so it belongs to the splitting field $\mathbb{F}$ of the minimal polynomial of $M$), then $f(M) \in M_n(\mathbb{K})$ and $\Phi_k(\lambda) \in \mathbb{F}$ , for every $k \ge 0$.

Furthermore:
$$f(M) = \sum_{i=1}^r \sum_{j=1}^{\rho_i}\, [ \sum_{k=0}^{\eta_i -1} \Phi_k(\lambda_{ij})(M-\lambda_{ij} I_n)^k] \, C_{ij}(M).$$
The semisimple and the nilpotent parts of $f(M)$ are respectively:

$S(f(M)) = \sum_{i=1}^r \sum_{j=1}^{\rho_i} f(\lambda_{ij}) C_{ij}(M)$ and

$N(f(M)) = \sum_{i=1}^r \sum_{j=1}^{\rho_i} \sum_{k=1}^{\eta_i} \Phi_k(\lambda_{ij})(M-\lambda_{ij} I_n)^k C_{ij}(M)$.
\end{prop}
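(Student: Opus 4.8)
The plan is to reduce the statement to the Taylor expansion of $f$ around each eigenvalue, using the Frobenius system $\{C_{ij}(M)\}$ as a set of "localizing" projectors. First I would establish the membership claims: that $f(M) \in M_n(\mathbb{K})$ and $\Phi_k(\lambda) \in \mathbb{F}$ follows from the observations already collected in \ref{val-ass}(c), (d), (e), namely that $M \in \Omega_{f,\mathbb{K}}$ forces convergence of $f(M)$ in the complete space $M_n(\mathbb{K})$, that every eigenvalue $\lambda$ lies in the complete finite extension $\mathbb{F}$ with $|\lambda| = \rho(\{\lambda\text{'s conjugates}\}) < R_f$, and that $R_{\Phi_k} \ge R_f$. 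In the polynomial case these are vacuous. So the heart of the matter is the displayed Schwerdtfeger formula.

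For the main formula I would argue as follows. Write $f(M) = \sum_m a_m M^m$ (a finite sum in the polynomial case, a convergent series otherwise). Using $\sum_{i,j} C_{ij}(M) = I_n$ from \ref{ident-bezout-per-M}(b) and the Frobenius relations $C_{ij}(M)C_{hk}(M) = \delta_{(i,j),(h,k)} C_{ij}(M)$, insert the identity and multiply through: $f(M) = \sum_{i,j} f(M) C_{ij}(M)$, and it suffices to compute $f(M) C_{ij}(M)$ for each pair $(i,j)$. Here I would expand each power $M^m = ((M - \lambda_{ij} I_n) + \lambda_{ij} I_n)^m = \sum_{k=0}^m \binom{m}{k} \lambda_{ij}^{m-k} (M - \lambda_{ij} I_n)^k$ by the binomial theorem (valid since $\lambda_{ij} I_n$ is central), multiply by $C_{ij}(M)$, and use the key nilpotency fact from \ref{S_i(M)N_i(M)}: $(M - \lambda_{ij} I_n)^{\eta_i} C_{ij}(M) = 0$ (since $(X - \lambda_{ij})^{\eta_i} C_{ij}(X)$ is a multiple of $m(X)$). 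This truncates the inner sum at $k = \eta_i - 1$, leaving $M^m C_{ij}(M) = \sum_{k=0}^{\eta_i - 1} \binom{m}{k} \lambda_{ij}^{m-k} (M - \lambda_{ij} I_n)^k C_{ij}(M)$. Summing over $m$ against $a_m$ and interchanging the (finite inner, possibly infinite outer) sums yields $f(M) C_{ij}(M) = \sum_{k=0}^{\eta_i - 1} \big(\sum_m \binom{m}{k} a_m \lambda_{ij}^{m-k}\big)(M - \lambda_{ij} I_n)^k C_{ij}(M) = \sum_{k=0}^{\eta_i-1} \Phi_k(\lambda_{ij})(M-\lambda_{ij}I_n)^k C_{ij}(M)$, which gives the displayed formula after summing over $i,j$.

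For the semisimple/nilpotent split I would observe that the right-hand side is already in the form "$\sum_{i,j} (\text{polynomial in } M - \lambda_{ij}I_n) \cdot C_{ij}(M)$" analyzed in \ref{SN-dec} and its proof: the $k=0$ term $\sum_{i,j} \Phi_0(\lambda_{ij}) C_{ij}(M) = \sum_{i,j} f(\lambda_{ij}) C_{ij}(M)$ is a linear combination of a Frobenius system hence semisimple (by \ref{Propr-partizione}), while the tail $\sum_{i,j}\sum_{k=1}^{\eta_i-1}\Phi_k(\lambda_{ij})(M-\lambda_{ij}I_n)^k C_{ij}(M)$ is nilpotent (a high enough power vanishes, exactly as in the nilpotency argument for $N(M)$ in \ref{SN-dec}, since each summand involves $(M-\lambda_{ij}I_n)^k C_{ij}(M)$ with $k \ge 1$), and the two pieces commute since both are polynomial expressions of $M$. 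Note the stated $N(f(M))$ runs $k$ up to $\eta_i$ rather than $\eta_i - 1$, but the extra term is zero by the nilpotency relation, so the two expressions agree; the uniqueness clause of \ref{SN-dec} then identifies these as $S(f(M))$ and $N(f(M))$. Finally I should check $f(\lambda_{ij}) = \Phi_0(\lambda_{ij})$ are pairwise-handled correctly: the semisimple part's Frobenius covariants may merge when $f(\lambda_{ij}) = f(\lambda_{i'j'})$ for distinct eigenvalues, but this does not affect the validity of the expression, only its being a \emph{Frobenius} decomposition — so I would phrase the conclusion as an equality of matrices, not a claim about covariants of $f(M)$.

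The step I expect to be the main obstacle is the interchange of summation $\sum_m a_m \sum_{k} \binom{m}{k}\lambda_{ij}^{m-k}(\cdots) = \sum_k (\sum_m a_m \binom{m}{k}\lambda_{ij}^{m-k})(\cdots)$ in the series case: this requires justifying that each $\Phi_k(\lambda_{ij})$ converges (granted by \ref{val-ass}(e)(i) together with $|\lambda_{ij}| < R_f$) and that the rearrangement is legitimate in the complete normed algebra $M_n(\mathbb{K})$ — which follows because $\rho(M) < R_f$ lets us bound the tail $\sum_{m \ge N}|a_m|\,\|M^m C_{ij}(M)\|$ using a submultiplicative norm with $\|M\| < R_f$, so the double series is absolutely convergent and Fubini for series applies. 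Everything else is routine binomial bookkeeping and appeals to \ref{Propr-partizione}, \ref{ident-bezout-per-M}, and \ref{SN-dec}.
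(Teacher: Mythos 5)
Your proof is correct and follows essentially the same route as the paper: insert $I_n=\sum_{i,j}C_{ij}(M)$, expand $M^m$ by the binomial theorem about each $\lambda_{ij}$, use $(M-\lambda_{ij}I_n)^{\eta_i}C_{ij}(M)=0$ to truncate, interchange the $m$ and $k$ sums (justified by absolute convergence from $\rho(M)<R_f$), and then read off the semisimple and nilpotent parts from the $k=0$ term versus the rest via the uniqueness clause of Theorem~\ref{SN-dec}. The only cosmetic difference is that the paper splits off $f(0)I_n$ and adopts the convention $\lambda_{r1}^s=0$ for $s\le 0$ to sidestep the $0^0$ ambiguity when $\lambda_{r1}=0$, whereas you implicitly take $0^0=1$; either convention works, and your additional remarks (the vacuous $k=\eta_i$ term and the possible merging of covariants of $f(M)$) match what the paper says in the proof and in Remark~\ref{sketch-decomp-fine}.
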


\begin{proof}
The first part of the statement has been already proved in \ref{val-ass}.

To complete the proof and to simplify the notations, we agree that $\lambda_{r1}^s = 0$ for every integer $s \le 0$ when  $\lambda_{r1} =0$.

Remembering the properties of the $C_{ij}(M)$'s in \ref{ident-bezout-per-M}, since 

$M= \sum_{i=1}^r \sum_{j=1}^{\rho_i}(\lambda_{ij} I_n + (M-\lambda_{ij} I_n)) C_{ij}(M)$, 

we have: $M^m = \sum_{i,j} [\sum_{k=0}^m {m \choose k} \lambda_{ij}^{m-k} (M-\lambda_{ij} I_n)^k] C_{ij}(M)$.

After posing ${m \choose k} =0$ for $k > m$, so $\Phi_k(X) = \sum_{m=1}^\infty {m \choose k} a_m X^{m-k}$, we can write: 

$M^m = \sum_{i,j} [\sum_{k=0}^\infty {m \choose k} \lambda_{ij}^{m-k} (M-\lambda_{ij} I_n)^k] C_{ij}(M)$.

Therefore: 
\begin{align*}
&f(M) -f(0)I_n = \sum_{m=1}^\infty a_m M^m 
= \sum_{k=0}^\infty \sum_{i,j}  [\sum_{m=1}^\infty {m \choose k} a_m \lambda_{ij}^{m-k}(M-\lambda_{ij}I_n)^k]C_{ij}(M)\\
&= \sum_{i,j}   [\sum_{m=1}^\infty a_m \lambda_{ij}^m]C_{ij}(M) + \sum_{k=1}^\infty \sum_{i,j}   [\sum_{m=1}^\infty {m \choose k} a_m \lambda_{ij}^{m-k}] \, (M-\lambda_{ij}I_n)^k C_{ij}(M) \\
&=\sum_{i,j} (\Phi_0(\lambda_{ij}) -f(0)) C_{ij}(M) + \sum_{i,j} [\sum_{k=1}^\infty \Phi_k(\lambda_{ij})(M-\lambda_{ij}I_n)^k] C_{ij}(M) \\
&= \sum_{i,j} [\sum_{k=0}^\infty \Phi_k(\lambda_{ij})(M-\lambda_{ij}I_n)^k] C_{ij}(M) - f(0)I_n.
\end{align*}

Hence:
$
f(M) = \sum_{i,j} [\sum_{k=0}^{\eta_i -1} \Phi_k(\lambda_{ij})(M-\lambda_{ij}I_n)^k] C_{ij}(M),
$

because $(M-\lambda_{ij}I_n)^{\eta_i} G_{ij}(M)=0$ by definition of $G_{ij}$ in \ref{pol-min}.

We conclude by remarking that for $k=0$ we get the semisimple part, while the remaining part is the nilpotent one.
\end{proof}

\begin{rem}
With the same notations as above if $M$ is semisimple then 
$$f(M)  = \sum_{i=1}^r \sum_{j=1}^{\rho_i} f(\lambda_{ij}) C_{ij}(M).$$
Indeed, if $M$ is semisimple, then $\eta_i = 1$ for every $i$.

In real and complex cases the above formula reduces to Sylvester's formula, while the more general formula
$$f(M) = \sum_{i=1}^r \sum_{j=1}^{\rho_i} \sum_{k=0}^{\eta_i -1} \Phi_k(\lambda_{ij})(M-\lambda_{ij} I_n)^k C_{ij}(M)$$ 
reduces to Schwerdtfeger's formula (see for instance \cite{HoJ1991}) Ch. 6).

Note that the first formula, Proposition \ref{Schw-Sylv} and the formula for $S(M)$ in Theorem \ref{SN-dec}, give that $S(f(M)) = f(S(M))$ for every $M \in \Omega_{f, \mathbb{K}}$.
\end{rem}

\begin{rem}\label{sketch-decomp-fine}
Let $f(X)$ and $M$ be as in \ref{Schw-Sylv}. 

While $S(M)= \sum_{i=1}^{r'} S_i(M) = \sum_{i=1}^{r'} \sum_{j=1}^{\rho_i} \lambda_{ij} C_{ij}(M)$ gives both the fine Jordan-Cheval\-ley decomposition and the Frobenius decomposition of $S(M)$,  from the expression  
$\sum_{i=1}^r \sum_{j=1}^{\rho_i} f(\lambda_{ij}) C_{ij}(M)$,
we cannot directly deduce the analogous decompositions of $S(f(M))$; nevertheless both decompositions can be deduced from it.

For the  Frobenius decomposition of $S(f(M))$, if the $f(\lambda_{ij})$'s are not pairwise distinct, we can sum the different $C_{ij}(M)$'s with the same coefficients $f(\lambda_{ij})$'s to get the desired Frobenius covariants as suitable sums of the Frobenius covariants of $M$.

For the fine Jordan-Chevalley decomposition of $S(f(M))$ we remember  that each fine component of the semisimple part of a matrix corresponds to a conjugacy class over $\mathbb{K}$ of eigenvalues of the matrix.

Since $\mathbb{F}$ is a finite normal extension of $\mathbb{K}$, the conjugacy class over $\mathbb{K}$ of every element of $\mathbb{F}$ is contained in $\mathbb{F}$ and moreover the conjugacy class over $\mathbb{K}$ of every element of $D_f$ is contained in $D_f$.

$Aut(\mathbb{F}/\mathbb{K})$ acts transitively over each conjugacy class over $\mathbb{K}$ contained in $\mathbb{F}$ as observed in \ref{pol-min} (d).

Since $f$ commutes with every element of $Aut(\mathbb{F}/\mathbb{K})$, $f$ maps conjugacy classes over $\mathbb{K}$, contained in $D_f$, onto conjugacy classes over $\mathbb{K}$, contained in $\mathbb{F}$. However different conjugacy classes can be mapped by $f$ into the same conjugacy class. 

If two such conjugacy classes are mapped by $f$ onto the same conjugacy class, we say that the corresponding fine components of $S(M)$ are $f$-\emph{equivalent}.

Therefore every fine component $S_h(f(M))$ of $S(f(M))$ is sum of terms of the type 
$\sum_{j=1}^{\rho_i} f(\lambda_{ij}) C_{ij}(M)$, where the sum is extended to all indices, $i$, corresponding to the fine components of $S(M)$ of a given $f$-equivalence class.

Analogously every fine component $N_h(f(M))$ of $N(f(M))$ is sum of terms of the type $\sum_{j=1}^{\rho_i} \sum_{k=1}^{\eta_i} \Phi_k(\lambda_{ij})(M-\lambda_{ij} I_n)^k C_{ij}(M)$, where again the sum is extended to all indices, $i$, corresponding to the fine components of $S(M)$ of a given $f$-equivalence class.
\end{rem}

\section{Some consequences on real closed fields}

\begin{remdef}
a) A field $\mathbb{K}$ is said to be a \emph{real closed field} if it can be ordered as field and no proper algebraic extension of $\mathbb{K}$ can be ordered as field.

Of course any ordered field has characteristic $0$.

It is known that if $\mathbb{K}$ is a real closed field, then it has a unique order (as field) and that two equivalent characterizations of being a real closed field are:

i) $\sqrt{-1} \notin \mathbb{K}$ and $\mathbb{K}(\sqrt{-1})$ is algebraically closed;

ii) $\mathbb{K}$ admits an order as field such that its positive elements have square root in $\mathbb{K}$ and any polynomial of odd degree in $\mathbb{K}[X]$ has a root in $\mathbb{K}$.

Note that, as in the ordinary real case, $\overline{\mathbb{K}} = \mathbb{K}(\sqrt{-1})$ and the irreducible polynomials in $\mathbb{K}[X]$ have degree at most $2$. Moreover if $\lambda = a + b \sqrt{-1}$ is root of $h(X) \in \mathbb{K}[X]$, then also its \emph{conjugate} $\overline{\lambda}=  a - b \sqrt{-1}$ is root of $h(X)$.

We refer for instance to \cite{Lang2002} Ch.XI \S2 and to \cite{Raw1993} Ch.15 for more information.

b) Let $\mathbb{K}$ be a real closed field. The $\mathbb{K}$-\emph{norm} of an element $\lambda=a + b\sqrt{-1} \in \overline{\mathbb{K}} = \mathbb{K}(\sqrt{-1})$ is the unique positive square root of $a^2 + b^2 \in  \mathbb{K}$, we denote by $N_\mathbb{K}(\lambda)$.

The norm is strictly positive as soon as $\lambda \ne 0$.

Standard computations show that, as in ordinary real case, every element $\lambda \in \overline{\mathbb{K}} \setminus \{ 0\}$ can be written as $\lambda = N_\mathbb{K}(\lambda) \dfrac{\lambda}{N_\mathbb{K}(\lambda)}$, where $N_\mathbb{K}(\lambda)$ is a strictly positive element of $\mathbb{K}$ and $N_\mathbb{K}(\dfrac{\lambda}{N_\mathbb{K}(\lambda)})=1$.
\end{remdef}

\begin{prop}\label{deltasigmau}
Assume that $\mathbb{K}$ is a real closed field. Let $M = S(M) + N(M) \in M_n(\mathbb{K})$ be a matrix  with its additive Jordan-Chevalley decomposition.

a) The Frobenius decomposition of $S(M)$ is
$$
S(M) = \sum_{h=1}^{s_1} [\lambda_{h1} \, C_{h1}(M) + \overline{\lambda_{h1}} \, C_{h2}(M)] + \sum_{i=s_1 + 1}^{s_1+s_2} \gamma_i C_{i1}(M) - \sum_{i= s_1+s_2 +1}^{r'} \gamma_i C_{i1}(M),
$$
where the $\gamma_i$'s are strictly positive elements of $\mathbb{K}$ for every $i = s_1 +1, \cdots , r'$, the $\lambda_{h1}$'s are in $\overline{\mathbb{K}} \setminus \mathbb{K}$ and $C_{h2}(M) = \overline{C_{h1}(M)}$ for every $h = 1, \cdots , s_1$.

b) Moreover, if $M \in GL_n(\mathbb{K})$, then there is a unique way to write
\begin{center}
$
M = \Delta \Sigma U
$
\end{center} 
as product of three mutually commuting matrices with coefficients in $\mathbb{K}$, with
$\Delta$ diagonalizable over $\mathbb{K}$ and strictly positive eigenvalues, $\Sigma$ semisimple and eigenvalues of norm $1$ and $U$ unipotent.

In particular we have:
\begin{align*}
&\Delta =\sum_{h=1}^{s_1} N_{\mathbb{K}}(\lambda_{h1}) [C_{h1}(M) + C_{h2}(M)] + \sum_{i=s_1 + 1}^{r'} \gamma_i C_{i1}(M),\\
&\Sigma = \sum_{h=1}^{s_1} [\dfrac{\lambda_{h1}}{N_{\mathbb{K}}(\lambda_{h1})} \, C_{h1}(M) + \dfrac{\overline{\lambda_{h1}}}{N_{\mathbb{K}}(\lambda_{h1})} C_{h2}(M)]
+ \sum_{i=s_1 + 1}^{s_1+s_2} C_{i1}(M) - \sum_{i= s_1+s_2 +1}^{r'} C_{i1}(M)\\
&(\mbox{hence \ } S(M)= \Delta \Sigma) \mbox{ and \ } U = I_n + S(M)^{-1}N(M).
\end{align*}

The matrices $\Delta$ , $\Sigma$ and $U$ are polynomial functions of $M$.
\end{prop}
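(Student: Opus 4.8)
The plan is to deduce part (a) directly from the Frobenius decomposition of $S(M)$ recorded in Remark~\ref{r'}, using the arithmetic of real closed fields, and then to obtain part (b) by refining the multiplicative Jordan-Chevalley decomposition $M=S(M)\,U(M)$ of Remark-Definition~\ref{JC-moltiplicativa}, splitting the semisimple factor $S(M)$ as a product $\Delta\Sigma$.

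For part (a) I would first note that a real closed field is perfect (characteristic $0$), so $\mathbb{K}=\mathbb{K}^\dag$ and $\eta_i=\mu_i$ for every $i$, and that every irreducible polynomial in $\mathbb{K}[X]$ has degree $1$ or $2$; hence each conjugacy class $\{\lambda_{i1},\dots,\lambda_{i\rho_i}\}$ has $\rho_i\in\{1,2\}$, with $\rho_i=2$ precisely for the non-real classes $\{\lambda_{i1},\lambda_{i2}\}\subset\overline{\mathbb{K}}\setminus\mathbb{K}$ and $\rho_i=1$ precisely for the real ones. Reordering the $r'$ classes of nonzero eigenvalues (notation of Remark~\ref{r'}) so that the $s_1$ non-real classes come first, then the $s_2$ positive real ones, then the negative real ones (writing the negative root of the $i$-th class as $-\gamma_i$ with $\gamma_i>0$), the Frobenius decomposition $S(M)=\sum_{i=1}^{r'}\sum_{j=1}^{\rho_i}\lambda_{ij}C_{ij}(M)$ takes the displayed form. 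To get $C_{h2}(M)=\overline{C_{h1}(M)}$ I would invoke the automorphism argument from the proof of Theorem~\ref{SN-dec}: complex conjugation $\varphi\in Aut(\mathbb{F}/\mathbb{K})$ satisfies $\varphi(C_{ij}(X))=C_{i\,\sigma_i^\varphi(j)}(X)$, and for a non-real class $\sigma_h^\varphi$ transposes $1$ and $2$; since $M$ has entries in $\mathbb{K}$, evaluation at $M$ commutes with $\varphi$, so $C_{h2}(M)=\overline{C_{h1}(M)}$.

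For part (b) I would set $U=U(M)=I_n+S(M)^{-1}N(M)$, which is unipotent and a polynomial in $M$ over $\mathbb{K}$ (legitimate since $M\in GL_n(\mathbb{K})$ forces $S(M)$ invertible), and define $\Delta,\Sigma$ by the displayed formulas. Writing each nonzero eigenvalue as $\mu=N_\mathbb{K}(\mu)\cdot\big(\mu/N_\mathbb{K}(\mu)\big)$ with $N_\mathbb{K}(\mu)>0$ in $\mathbb{K}$ and $\mu/N_\mathbb{K}(\mu)$ of norm $1$ (for $\mu=-\gamma_i$ this reads $\gamma_i\cdot(-1)$), a direct computation using the Frobenius-system relations $C_{ij}(M)C_{kl}(M)=\delta_{(ij),(kl)}C_{ij}(M)$ and $\sum C_{ij}(M)=I_n$ shows $\Delta\Sigma=S(M)$ by comparison with part (a), hence $M=\Delta\Sigma U$. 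Then I would check the remaining properties: $\Delta$ and $\Sigma$ are $\overline{\mathbb{K}}$-combinations of the $C_{ij}(M)$, hence polynomial expressions of $M$, and they are fixed by $\varphi$ (which swaps $C_{h1}(M)\leftrightarrow C_{h2}(M)$ and the companion coefficients while fixing $N_\mathbb{K}(\lambda_{h1}),\gamma_i\in\mathbb{K}$), so by the reasoning of Theorem~\ref{SN-dec} their coefficients lie in $\mathbb{K}^\dag=\mathbb{K}$; thus $\Delta,\Sigma,U$ are pairwise commuting polynomials in $M$. Being polynomials in the semisimple matrix $S(M)$, $\Delta$ and $\Sigma$ are semisimple; $\Delta$ has eigenvalues $N_\mathbb{K}(\lambda_{h1})$ and $\gamma_i$, all in $\mathbb{K}$ and strictly positive, so it is diagonalizable over $\mathbb{K}$ with strictly positive eigenvalues, while $\Sigma$ has eigenvalues $\lambda_{h1}/N_\mathbb{K}(\lambda_{h1})$, $\overline{\lambda_{h1}}/N_\mathbb{K}(\lambda_{h1})$ and $\pm1$, all of norm $1$.

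For uniqueness, given another decomposition $M=\Delta'\Sigma'U'$ with the stated properties, $\Delta'$ and $\Sigma'$ are commuting semisimple matrices, hence simultaneously diagonalizable over $\overline{\mathbb{K}}$, so $\Delta'\Sigma'$ is semisimple and commutes with the unipotent $U'$; by uniqueness of the multiplicative Jordan-Chevalley decomposition (Remark-Definition~\ref{JC-moltiplicativa}), $\Delta'\Sigma'=S(M)$ and $U'=U$. On a common eigenvector $v$ of $\Delta',\Sigma'$, say $\Delta'v=\delta v$, $\Sigma'v=\sigma v$ with $\delta\in\mathbb{K}$, $\delta>0$, $N_\mathbb{K}(\sigma)=1$, one has $S(M)v=\delta\sigma v$, and multiplicativity of $N_\mathbb{K}$ (from $N_\mathbb{K}(\lambda)^2=\lambda\overline\lambda$) gives $N_\mathbb{K}(\delta\sigma)=\delta$, so $\delta=N_\mathbb{K}(\mu)$ and $\sigma=\mu/N_\mathbb{K}(\mu)$ are determined by the eigenvalue $\mu:=\delta\sigma$ of $S(M)$ alone. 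Hence $\Delta'$ and $\Sigma'$ act on the $\mu$-eigenspace of $S(M)$ exactly as $\Delta$ and $\Sigma$ do; since $\overline{\mathbb{K}}^n$ is the direct sum of the eigenspaces of $S(M)$ (Lemma~\ref{Propr-partizione}), $\Delta'=\Delta$ and $\Sigma'=\Sigma$. The only non-routine step is precisely this uniqueness argument — the reduction to the uniqueness of the multiplicative SN decomposition and the use of the multiplicativity of the $\mathbb{K}$-norm together with simultaneous diagonalisation of $\Delta',\Sigma'$; part (a) and the verification of the properties in part (b) are bookkeeping with the Frobenius covariants.
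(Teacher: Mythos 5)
Your proof is correct and follows essentially the same route as the paper: part (a) via the degree-$\le 2$ factorization of the minimal polynomial over a real closed field, and part (b) by splitting each nonzero eigenvalue as $N_\mathbb{K}(\lambda)\cdot\big(\lambda/N_\mathbb{K}(\lambda)\big)$ inside the multiplicative SN decomposition of Remark-Definition \ref{JC-moltiplicativa}, then reducing uniqueness to the uniqueness of that decomposition. The only small variation is in the final step of the uniqueness argument: the paper forms $\Delta^{-1}\Delta' = \Sigma\,\Sigma'^{-1}$ and observes that the left side is diagonalizable over $\mathbb{K}$ with strictly positive eigenvalues while the right side is semisimple with eigenvalues of norm $1$, hence both equal $I_n$, whereas you compare $\Delta,\Delta'$ and $\Sigma,\Sigma'$ eigenvector by eigenvector on a simultaneous eigenbasis of $\Delta',\Sigma'$ --- both hinge on the same key fact that $1$ is the unique strictly positive element of $\mathbb{K}$ with $\mathbb{K}$-norm $1$.
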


\begin{proof}
Part (a) follows from \ref{SN-dec} and from the fact that the irreducible polynomials in $\mathbb{K}[X]$ have degree at most $2$.

For (b): by \ref{JC-moltiplicativa} we have $M=S(M) U(M) = S(M) (I_n+ S(M)^{-1}N(M))$ (with all factors which are polynomials in $M$), where $S(M)$ has the expression in (a).
Now, for every $h$, we write: $\lambda_{h1} = N_\mathbb{K}(\lambda_{h1}) \dfrac{\lambda_{h1}}{N_\mathbb{K}(\lambda_{h1})}$, where $N_\mathbb{K}(\lambda_{h1})$ is a strictly positive element of $\mathbb{K}$ and $N_\mathbb{K}(\dfrac{\lambda_{h1}}{N_\mathbb{K}(\lambda_{h1})})=1$ ; so, by standard computations, we get the equality $M = \Delta \Sigma U$ with $\Delta, \Sigma, U$ satisfying the requested properties.

For the uniqueness, assume that $M =  \Delta' \Sigma' U'$ is another decomposition with the expected properties. Since $\Delta', \Sigma', U'$ are pairwise commuting, each one commutes with $M$ and so with any polynomial expression of $M$ (as $\Delta, \Sigma$ and $U$).
Moreover $\Delta' \Sigma'$ is semisimple, so from the uniqueness of the multiplicative Jordan-Chevalley decomposition: $U=U'$ and $\Delta \Sigma = \Delta' \Sigma'$. Now $\Delta^{-1} \Delta' = \Sigma \, \Sigma'^{-1}$. By commutativity, the left side is a diagonalizable matrix with strictly positive eigenvalues, while the right side is a semisimple matrix with eigenvalues of norm $1$. Since the unique positive element of $\mathbb{K}$ with norm $1$ is $1$ itself , both products are the identity matrix.
\end{proof}

\begin{remdef}
If $\mathbb{K} = \mathbb{R}$, the decomposition $M= \Delta  \Sigma U$ in \ref{deltasigmau} (b) is well-known (see for instance  \cite{Helg2001} pp. 430--431). Hence, following the usual terminology, we refer to $M= \Delta \Sigma U$ as \emph{the complete multiplicative Jordan-Chevalley decomposition of} $M$ also in case of any real closed field.

Note that, when $\mathbb{K} = \mathbb{R}$, part (b) of the previous Proposition implies that every matrix of $GL_n(\mathbb{R})$ can be written in a unique way as product of a real matrix similar to a positive definite symmetric matrix, of a real matrix similar to an orthogonal matrix and of a real unipotent matrix, where the three matrices are pairwise commuting. 
\end{remdef}

\begin{defi}
Let $\mathbb{K}$ be real closed, so $\overline{\mathbb{K}} = \mathbb{K}(\sqrt{-1})$. As in the ordinary real case we say that a matrix $A \in M_n(\overline{\mathbb{K}})$ is \emph{normal} (respectively \emph{hermitian}) if $AA^* = A^*A$ (respectively $A= A^*$) where $A^*$ is the transpose conjugated matrix of $A$ (if $A \in M_n(\mathbb{K})$, then $A^*$ is simply the transpose of $A$).
\end{defi}

\begin{rem}
As in the ordinary real and complex cases we can define a positive definite hermitian product over $\overline{\mathbb{K}}^n$ by $<z, w>_{\overline{\mathbb{K}}^n} \, = z^* w$ and a positive definite scalar product  over $\mathbb{K}^n$ by $<z, w>_{\mathbb{K}^n} \, = z^T w$ for all $z, w$ (column) vectors in $\overline{\mathbb{K}}^n$ and of $\mathbb{K}^n$ respectively.

As noted in \cite{Lang2002} p.585, the ordinary spectral theorems are valid if $\mathbb{K}$ is real closed. Hence a matrix $A \in M_n(\overline{\mathbb{K}})$ is normal if and only if there exists an orthonormal basis of $\overline{\mathbb{K}}^n$ of eigenvectors of $A$ and a matrix $A \in M_n(\mathbb{K})$ is symmetric if and only if there exists an orthonormal basis of $\mathbb{K}^n$ of eigenvectors of $A$.
\end{rem}

\begin{lemma}\label{normal-matrices}
Let $\mathbb{K}$ be a real closed field and $A \in M_n(\overline{\mathbb{K}}) \setminus \{0\}$. Then

i) $A$ is normal if and only if it is semisimple and its Frobenius covariants are hermitian matrices;

ii) $A$ is hermitian if and only if it is semisimple, its Frobenius covariants are hermitian matrices and its eigenvalues are in $\mathbb{K}$.
\end{lemma}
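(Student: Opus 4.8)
\textbf{Proof plan for Lemma \ref{normal-matrices}.}
The plan is to reduce everything to the spectral theorem for normal matrices over $\overline{\mathbb{K}}=\mathbb{K}(\sqrt{-1})$, which is available since the ordinary spectral theorems hold when $\mathbb{K}$ is real closed (as recalled above). First I would prove (i). For the ``only if'' direction, suppose $A$ is normal. By the spectral theorem there is an orthonormal basis of $\overline{\mathbb{K}}^n$ of eigenvectors of $A$, so in particular $A$ is diagonalizable over $\overline{\mathbb{K}}$, hence semisimple, and by Proposition \ref{unicita-dec-Frob} it has a (unique) Frobenius decomposition $A=\sum_{i=1}^p\lambda_i A_i$. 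Here each $A_i$ is, by Lemma \ref{Propr-partizione}(c), the projection onto the eigenspace $\mathrm{Ker}(A-\lambda_i I_n)=\mathrm{Im}\,A_i$ along the sum of the other eigenspaces together with $\mathrm{Ker}\,A$; since the chosen basis is orthonormal, all these eigenspaces are mutually orthogonal with respect to $\langle z,w\rangle=z^*w$, so $A_i$ is the \emph{orthogonal} projection onto $\mathrm{Im}\,A_i$. An orthogonal projection is hermitian (this is the routine computation $\langle A_i z,w\rangle=\langle z,A_i w\rangle$, checked by splitting $z,w$ into their components), so each $A_i=A_i^*$.

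For the ``if'' direction of (i), suppose $A$ is semisimple with Frobenius decomposition $A=\sum_{i=1}^p\lambda_i A_i$ and each $A_i$ hermitian. Then $A^*=\sum_i\overline{\lambda_i}A_i^*=\sum_i\overline{\lambda_i}A_i$, and using the Frobenius-system relations $A_iA_j=\delta_{ij}A_i$ one computes directly $AA^*=\sum_i|\lambda_i|^2 A_i=A^*A$, so $A$ is normal. Part (ii) then follows immediately: if $A$ is hermitian it is in particular normal, so by (i) it is semisimple with hermitian Frobenius covariants; moreover $A=A^*=\sum_i\overline{\lambda_i}A_i$ forces, by uniqueness of the Frobenius decomposition (Proposition \ref{unicita-dec-Frob}), $\overline{\lambda_i}=\lambda_i$ for all $i$, i.e. every eigenvalue lies in $\mathbb{K}$. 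Conversely, if $A$ is semisimple with hermitian covariants and eigenvalues in $\mathbb{K}$, then $A^*=\sum_i\overline{\lambda_i}A_i=\sum_i\lambda_i A_i=A$.

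The only genuinely delicate point is the identification, in the ``only if'' direction of (i), of the Frobenius covariant $A_i$ with the \emph{orthogonal} projection onto the $\lambda_i$-eigenspace: one must argue that the complement along which $A_i$ projects, namely $\bigoplus_{j\ne i}\mathrm{Im}\,A_j\oplus\mathrm{Ker}\,A$, is exactly the orthogonal complement of $\mathrm{Im}\,A_i$. This is where orthonormality of the eigenbasis provided by the spectral theorem is used: distinct eigenspaces of a normal matrix are orthogonal, and the decomposition $\overline{\mathbb{K}}^n=\mathrm{Im}\,A_1\oplus\cdots\oplus\mathrm{Im}\,A_p\oplus\mathrm{Ker}\,A$ from Lemma \ref{Propr-partizione}(a) is then an orthogonal direct sum, so $A_i$ is the orthogonal projection and hence self-adjoint. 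Everything else is a short manipulation of the relations $A_iA_j=\delta_{ij}A_i$ together with the uniqueness statement in Proposition \ref{unicita-dec-Frob}.
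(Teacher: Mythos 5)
Your proof is correct and takes essentially the same route as the paper: both invoke the spectral theorem to get an orthonormal eigenbasis and then identify each Frobenius covariant as the orthogonal projection onto the corresponding eigenspace (the paper writes this projection explicitly as $\sum_j v_{ij}v_{ij}^*$, whereas you argue abstractly from orthogonality of the decomposition, but it is the same idea), followed by the same short computation $AA^*=A^*A=\sum_i N_{\mathbb{K}}(\lambda_i)^2 A_i$ for the converse. Part (ii) is also handled in the same way, with your use of uniqueness of the Frobenius decomposition to force $\overline{\lambda_i}=\lambda_i$ being a mild, equivalent variant of the paper's appeal to the standard fact that hermitian matrices have eigenvalues in $\mathbb{K}$.
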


\begin{proof}
Let $\lambda_1, \cdots, \lambda_s$ be the nonzero distinct eigenvalues of the normal matrix $A \ne 0$ of multiplicity $n_1, \cdots , n_s$ respectively and choose a set orthonormal (column) eigenvectors $v_{ij}$, $1 \le i \le s$, $1 \le j \le n_i$, such that every $v_{ij}$ is an eigenvector associated to the eigenvalue $\lambda_i$.
It is easy to check that $A = \sum_{i=1}^s \sum_{j=1}^{n_i} \lambda_i v_{ij}v_{ij}^* = \sum_{i=1}^s \lambda_i A_i$, where every $A_i = \sum_{j=1}^{n_i}v_{ij}v_{ij}^* = A_i^*$ is a nonzero hermitian matrix in $M_n(\overline{\mathbb{K}})$ and $A_i A_h =  \delta_{ih} A_i$. So by \ref{unicita-dec-Frob} the matrices $A_i$'s are the Frobenius covariants of $A$.

For the converse, if $A= \sum_{i=i}^s \lambda_i A_i$ is the Frobenius decomposition of $A$ with $A_i$'s hermitian matrices, then $A^*= \sum_{i=i}^s \overline{\lambda}_i A_i$ and so $A A^* = A^* A = \sum_{i=1}^s N_{\mathbb{K}}(\lambda_i)^2 A_i$.

For part (ii), an implication follows by remarking that hermitian matrices are also normal and that their eigenvalues are in $\mathbb{K}$. The other implication follows directly from the properties of the Frobenius decomposition.
\end{proof}

\begin{defi}
A non-empty family of matrices $A_1, \cdots , A_p \in M_n(\overline{\mathbb{K}}) \setminus \{ 0 \} $ is said to be an \emph{SVD system}, if 

$A_i^* A_j = A_i A_j^* = 0$ for every $i \ne j$;

$A_i A_i^* A_i = A_i$ for every $i$.

We call \emph{singular value decomposition of} $A \in M_n(\overline{\mathbb{K}})$ (shortly \emph{SVD}) any decomposition  
$$A= \sum_{i=1}^p \sigma_i A_i  ,$$
where $A_1, \cdots , A_p \in M_n(\overline{\mathbb{K}}) \setminus \{0\}$ form an SVD system and $\sigma_1 > \cdots > \sigma_p >0$ are elements of $\mathbb{K}$.
\end{defi}

\begin{prop}\label{SVD-esiste}
Assume that $\mathbb{K}$ is a real closed field. 

Then every matrix $A \in M_n(\overline{\mathbb{K}}) \setminus\{0\}$ has an SVD: $A= \sum_{i=1}^p \sigma_i A_i$.

If $A \in M_n(\mathbb{K}) \setminus\{0\}$, then we can take every $A_i$ in $M_n(\mathbb{K})$.
\end{prop}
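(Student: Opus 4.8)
The plan is to construct the SVD from the spectral decomposition of the positive semidefinite hermitian matrix $B = A^* A$. First I would apply the spectral theorem over the real closed field $\mathbb{K}$ (valid as recalled above): $B$ is semisimple, and since $v^* B v = (Av)^*(Av) \ge 0$ for every $v \in \overline{\mathbb{K}}^n$, all eigenvalues of $B$ lie in $\mathbb{K}$ and are $\ge 0$. Let $\sigma_1^2 > \cdots > \sigma_p^2 > 0$ be its distinct strictly positive eigenvalues and $\sigma_i \in \mathbb{K}$ their positive square roots (which exist because $\mathbb{K}$ is real closed); let $P_1, \dots, P_p$ be the Frobenius covariants of $B$ attached to $\sigma_1^2, \dots, \sigma_p^2$, so that by \ref{normal-matrices} each $P_i$ is hermitian, $P_i P_j = \delta_{ij} P_i$, and $B = \sum_{i=1}^p \sigma_i^2 P_i$.

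Next I would set $A_i = \sigma_i^{-1} A P_i$ and verify the defining identities by direct computation, using $P_i (A^* A) P_j = \sigma_i^2 \delta_{ij} P_i$ throughout. For $i \ne j$ one gets $A_i^* A_j = \sigma_i^{-1} \sigma_j^{-1} P_i (A^* A) P_j = 0$ and $A_i A_j^* = \sigma_i^{-1} \sigma_j^{-1} A P_i P_j A^* = 0$, while $A_i A_i^* A_i = \sigma_i^{-3} A P_i (A^* A) P_i = \sigma_i^{-1} A P_i = A_i$; moreover $A_i \ne 0$, since $A_i = 0$ would give $0 = (A P_i)^* (A P_i) = P_i (A^* A) P_i = \sigma_i^2 P_i$, contradicting $P_i \ne 0$. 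Finally, by \ref{Propr-partizione} applied to $B = \sum_i \sigma_i^2 P_i$ one has $\overline{\mathbb{K}}^n = Im\, P_1 \oplus \cdots \oplus Im\, P_p \oplus Ker\, B$ and $I_n - \sum_i P_i$ is the projection onto $Ker\, B$; since $Bv = 0$ forces $(Av)^*(Av) = 0$, hence $Av = 0$ by positive definiteness of the hermitian product, $A$ annihilates $Ker\, B$, so $\sum_{i=1}^p \sigma_i A_i = A \sum_{i=1}^p P_i = A$. This exhibits the required SVD $A = \sum_{i=1}^p \sigma_i A_i$ with $\sigma_1 > \cdots > \sigma_p > 0$ in $\mathbb{K}$.

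For the last assertion I would observe that if $A \in M_n(\mathbb{K})$ then $B = A^T A \in M_n(\mathbb{K})$ is symmetric, hence diagonalizable over $\mathbb{K}$ with all eigenvalues in $\mathbb{K}$; consequently (as in \S 1, the polynomials $C_{ij}(X)$ of \ref{ident-bezout-per-M} have coefficients in $\mathbb{K}$ when all eigenvalues lie in $\mathbb{K}$, and here $\mathbb{K}^\dag = \mathbb{K}$) the Frobenius covariants $P_i$ are polynomial expressions of $B$ with coefficients in $\mathbb{K}$, so each $P_i \in M_n(\mathbb{K})$ and hence $A_i = \sigma_i^{-1} A P_i \in M_n(\mathbb{K})$.

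I expect the only delicate point to be bookkeeping: making sure the spectral-theoretic inputs invoked (semisimplicity of $B$, eigenvalues in $\mathbb{K}$, hermitian Frobenius covariants, positive square roots in $\mathbb{K}$) are precisely those already established for real closed fields in this section. Once $P_i (A^* A) P_j = \sigma_i^2 \delta_{ij} P_i$ is in hand, checking the two SVD-system axioms and the summation $A = \sum_i \sigma_i A_i$ is routine.
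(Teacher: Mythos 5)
Your proof is correct and takes essentially the same route as the paper: both diagonalize $A^*A$, take $\sigma_i$ to be the positive square roots of its positive eigenvalues, and form the $A_i$'s by applying $A$ to the corresponding eigenprojections (the paper writes $A_i=\sigma_i^{-1}\sum_j Av_{ij}v_{ij}^*$ via an explicit orthonormal eigenbasis; you write $A_i=\sigma_i^{-1}AP_i$ via the Frobenius covariants $P_i=\sum_j v_{ij}v_{ij}^*$ of $A^*A$, which are the same matrices). Your formulation via the identity $P_i(A^*A)P_j=\sigma_i^2\delta_{ij}P_i$ makes the SVD-system verifications and the $\mathbb{K}$-rationality argument slightly more transparent, but the underlying argument is identical.
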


\begin{proof}
The matrix $A^* A$ is hermitian positive semidefinite with non-negative eigenvalues in $\mathbb{K}$. Note that $Ker(A^* A)= Ker(A)$. 
Indeed, if $w \in Ker \, A^* A$, then $0 = <A^* A w, w>_{\overline{\mathbb{K}}^n} = <A w, A w>_{\overline{\mathbb{K}}^n}$, so $Aw=0$ and $w \in Ker \, A$. The other inclusion is trivial. Hence $A \ne 0$ implies $A^* A \ne 0$ and so there exists a nonzero eigenvalue of $A^* A$.
Up to reordering, we can assume that the strictly positive eigenvalues are $\lambda_1 > \cdots > \lambda_p$ (of multiplicity $n_1, \cdots , n_p$ respectively), with $p \ge 1$, and we denote by $\sigma_i$ the unique strictly positive square root of $\lambda_i$, for $i = 1 , \cdots  , p$.

As in \ref{normal-matrices}, we consider a set of orthonormal (column) eigenvectors of $A^* A$ given by $\{ v_{ij} \ / \ 1 \le i \le p, \ 1 \le j \le n_i \}$ and, if necessary, we complete it to an orthonormal basis of $\overline{\mathbb{K}}^n$ by means of an orthonormal basis  $\{w_1, \cdots , w_\nu \}$ of $Ker(A^* A$). 

Since these vectors form an orthonormal basis, we have $I_n = \sum_{i,j} v_{ij} v_{ij}^* + \sum_l w_l w_l^*$.

Therefore $A= \sum_{i,j} A v_{ij} v_{ij}^* = \sum_i \sigma_i \sum_j \dfrac{A v_{ij} v_{ij}^*}{\sigma_i} = \sum_{i=1}^p \sigma_i A_i$ 
with 

$A_i = \sum_{j=1}^{n_i} \dfrac{A v_{ij} v_{ij}^*}{\sigma_i}$. It is easy to check that this an SVD decomposition of $A$.

Finally, if $A \in M_n(\mathbb{K}) \setminus \{0\}$, we can consider $A^T A$ and the corresponding orthonormal basis of $\mathbb{K}^n$. Hence analogously we get: $A= \sum_{i,j} A v_{ij} v_{ij}^T = \sum_i \sigma_i \sum_j \dfrac{A v_{ij} v_{ij}^T}{\sigma_i} = \sum_{i=1}^p \sigma_i A_i$ with $A_i = \sum_{j=1}^{n_i} \dfrac{A v_{ij} v_{ij}^T}{\sigma_i} \in M_n(\mathbb{K})$.
\end{proof}

\begin{remdef}
Note that in the previous proof the coefficients $\sigma_i$'s are the positive square roots of the nonzero eigenvalues of $A^* A$ (or of $A^T A$ if $A \in M_n(\mathbb{K})$). As in real and complex cases,  we call them \emph{singular values} of the matrix $A$ (see for instance \cite{HoJ2013} Thm.2.6.3 and \cite{OttPaol2015} Thm. 3.4).

If $A$ is normal, then, as in the proof of \ref{normal-matrices}, we get that the singular values of $A$ are the distinct elements of the form $N_{\mathbb{K}}(\lambda_i)$, where $\lambda_i$ runs over the set of nonzero eigenvalues of $A$.

The above decomposition is unique, as precised in the following
\end{remdef}

\begin{prop}\label{SVF-unica}
Let $\mathbb{K}$ be a real closed field and $A = \sum_{i=1}^p \sigma_i A_i \in M_n(\overline{\mathbb{K}}) \setminus \{0\}$ be a matrix with its SVD constructed in \ref{SVD-esiste}.
Let $A = \sum_{i=1}^q \tau_i B_i$ be any other SVD of $A$.
Then $q=p$ and, for every $i$, $\tau_i= \sigma_i$ and $B_i = A_i$.
\end{prop}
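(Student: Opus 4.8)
The plan is to reduce the uniqueness of the SVD to the uniqueness of the Frobenius decomposition (Proposition~\ref{unicita-dec-Frob}), applied to the hermitian matrix $A^*A$.

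First I would draw the elementary consequences of the SVD-system axioms. Given \emph{any} SVD $A = \sum_{i=1}^q \tau_i B_i$, put $P_i = B_i^* B_i$. Each $P_i$ is hermitian, and the relations $B_i B_i^* B_i = B_i$ and $B_i B_j^* = 0$, $B_i^* B_j = 0$ for $i\ne j$ give immediately $P_i^2 = B_i^*(B_iB_i^*B_i) = P_i$, $\ P_iP_j = B_i^*(B_iB_j^*)B_j = 0$ for $i\ne j$, and $P_i\ne 0$ (since $B_iP_i = B_i \ne 0$); moreover $A^*A = \sum_{i,j}\tau_i\tau_j B_i^*B_j = \sum_{i=1}^q \tau_i^2 P_i$. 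Hence $\{P_1,\dots,P_q\}$ is a Frobenius system and, the $\tau_i^2$ being nonzero and pairwise distinct, $A^*A = \sum_{i=1}^q \tau_i^2 P_i$ is a Frobenius decomposition of $A^*A$.

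Since $A^*A$ is hermitian it is semisimple, so by Proposition~\ref{unicita-dec-Frob} its Frobenius decomposition is unique. Applying the previous step both to the SVD $A=\sum_{i=1}^p\sigma_iA_i$ built in~\ref{SVD-esiste} and to the given SVD $A=\sum_{i=1}^q\tau_iB_i$, I obtain $\sum_{i=1}^p\sigma_i^2(A_i^*A_i) = A^*A = \sum_{i=1}^q\tau_i^2(B_i^*B_i)$, two Frobenius decompositions of $A^*A$. By~\ref{Propr-partizione} their coefficients are exactly the distinct nonzero eigenvalues of $A^*A$, which are listed in decreasing order in both sums; hence $q=p$ and $\tau_i=\sigma_i$ for every $i$, and then uniqueness of the covariants gives $B_i^*B_i = A_i^*A_i =: P_i$ for every $i$.

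Finally I would recover the matrices. Using $B_jB_i^*=0$ for $j\ne i$ and $B_iB_i^*B_i=B_i$, I compute $AP_i = \bigl(\sum_j\sigma_jB_j\bigr)B_i^*B_i = \sigma_iB_i$; the identical computation with the $A_i$'s gives $AP_i = \sigma_iA_i$. Therefore $\sigma_iB_i=\sigma_iA_i$ and, since $\sigma_i\ne 0$, $B_i = A_i$, which finishes the proof. The argument is almost mechanical once one looks at $A^*A$; the only place asking for a little care is verifying that $\{B_i^*B_i\}$ genuinely forms a Frobenius system so that Proposition~\ref{unicita-dec-Frob} is applicable, and matching the covariant of $A^*A$ for the eigenvalue $\sigma_i^2$ with $A_i^*A_i$ — which, as above, is cleanest by running the same reduction on the constructed SVD rather than recomputing it from the chosen orthonormal eigenvectors.
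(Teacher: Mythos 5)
Your proof is correct, and it takes a genuinely different route from the one in the paper. The paper proves uniqueness by a dimension count: it establishes the direct-sum decomposition $\overline{\mathbb{K}}^n = (\oplus_j \mathrm{Im}\, B_j^*) \oplus \mathrm{Ker}\, A$, shows $\mathrm{Im}\, B_h^* \subseteq \mathrm{Ker}(A^*A - \tau_h^2 I_n)$, compares dimensions on both sides to force $q=p$ and $\tau_h=\sigma_h$, and finally checks $B_h = A_h$ by evaluating both on the chosen orthonormal eigenbasis of $A^*A$. You instead observe that the SVD-system axioms make $\{B_i^*B_i\}$ a Frobenius system with $A^*A = \sum_i \tau_i^2\, B_i^*B_i$, so that the question is absorbed into the already-proved uniqueness of the Frobenius decomposition (Proposition~\ref{unicita-dec-Frob}, applied over the algebraically closed field $\overline{\mathbb{K}}$ to the semisimple hermitian matrix $A^*A$); once the projectors $P_i=B_i^*B_i=A_i^*A_i$ and the coefficients $\tau_i=\sigma_i$ are identified, the matrices themselves are recovered from $AP_i = \sigma_i A_i = \tau_i B_i$. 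Your argument is cleaner and more in keeping with the earlier machinery of the paper, replacing the basis-dependent verification and dimension count with a single appeal to Frobenius uniqueness; the paper's argument, on the other hand, makes explicit the geometric content (the $\mathrm{Im}\,B_h^*$'s are precisely the eigenspaces of $A^*A$), which is perhaps more transparent but less economical. One small point worth making explicit in your write-up: Proposition~\ref{unicita-dec-Frob} is stated for matrices over an arbitrary base field $\mathbb{K}$, and you are invoking it with $\overline{\mathbb{K}}$ as the base field (so that $\overline{\overline{\mathbb{K}}}=\overline{\mathbb{K}}$); this is legitimate but should be flagged.
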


\begin{proof}

Step $1$. $\overline{\mathbb{K}}^n = (\oplus_{j=1}^q Im \, B_j^*) \oplus Ker \, A$.

Indeed for every $v \in \overline{\mathbb{K}}^n$ we get that $v - \sum_{j=1}^q B_j^* B_j v$ is an element of $Ker \, A$ by standard applications of the properties of SVD systems and this allows to get that $\overline{\mathbb{K}}^n = (\sum_{j=1}^q Im \, B_j^*) + Ker \, A$.

If $\sum_{j=1}^q B_j^* v_j + w =0$ with $w \in Ker \, A$, then for every $h$ the SVD properties give: $B_h^* A (\sum_{j=1}^q B_j^* v_j + w) = \tau_h B_h^* v_h =0$ with $\tau_h > 0$, so $B_h^* v_h =0$ for every $h$ and so $w=0$ and the sum is direct.

Step $2$. We have $q \le p$, every $\tau_h$ is a singular value of $A$ and 

$Im \, B_h^* \subseteq Ker(A^*A - \tau_h^2 I_n)$.

For, it suffices to remark that, for every $h$, we have $B_h^* \neq 0$ and $A^* A B_h^* v = \tau_h^2 B_h^* v$ for every $v \in \overline{\mathbb{K}}^n$ and this follows again by the same properties.

Step $3$. We have: $q=p$ and, for every $h$, $\tau_h = \sigma_h$ and $Im \, B_h^* = Ker(A^*A - \tau_h^2 I_n)$.

By step $1$ we have $n = \sum_{i=1}^q dim (Im \, B_i^*) + dim(Ker \, A)$. By step $2$ and by $Ker(A^* A)= Ker(A)$ we have:
$n \le \sum_{i=1}^q dim(A^* A - \tau_i^2 I_n) + dim \, Ker(A^*A -0 I_n)$ 

$\le \sum_{i=1}^p dim(A^* A - \sigma_i^2 I_n) + dim \, Ker(A^*A -0 I_n) = n$, 
(the last equality because $A^* A$ is diagonalizable over $\overline{\mathbb{K}}$). 

Hence all inequalities are actually equalities and this is possible only if step $3$ holds.

Step $4$. We have $B_h = A_h$ for every $h$.

It suffices to check the equalities on the vectors of the same orthonormal basis of eigenvectors of $A^* A$ denoted by $\{ v_{ij} \ / \ 1 \le i \le p, \ 1 \le j \le n_i \} \cup \{w_1, \cdots , w_\nu \}$ in the proof of \ref{SVD-esiste}.
The properties of SVD systems give that, for every $h$, $B_h A^* A = \sigma_h^2 B_h$ and so $B_h = \dfrac{B_h A^* A}{\sigma_h^2}$. Hence $A_h w_j = 0 =B_h w_j$ for every $h,j$. 

By step $3$, for every $i,j$ there is $u_{ij} \in \overline{\mathbb{K}}^n$ such that $v_{ij} = B_i^* u_{ij}$. 

Now, for every $h,i,j$, $A_h v_{ij} = \dfrac{\delta_{hi}}{\sigma_h} A v_{hj} = \dfrac{\delta_{hi}}{\sigma_h} A B_h^* u_{hj}$ and this last is easily reduced to $B_h B_i^* u_{ij} = B_h v_{ij}$. This concludes the proof.
\end{proof}

\begin{remdef}
As shown above, every matrix $A \in  M_n(\overline{\mathbb{K}}) \setminus \{0\}$, where $\mathbb{K}$ is a real closed field, has a unique SVD: $A = \sum_{i=1}^p \sigma_i A_i$ and the values $\sigma_i$'s are the singular values of $A$.
We call the matrices $A_i$'s \emph{the SVD components of} $A$. 

By \ref{SVD-esiste}, the SVD components of $A$ have coefficients in $\mathbb{K}$ as soon as $A$ has coefficients in $\mathbb{K}$.
\end{remdef}

\end{document}